\def\bkR{{\rm I\kern-.17em R}}
\theoremstyle{definition}
\newtheorem{definition}{Definition}[section]
\newtheorem{thm}{Theorem}[section]
\newtheorem{example}{Example}[section]
\newtheorem{corollary}{Corollary}
\theoremstyle{remark}
\begin{document}
\title{An operational test for existence of a consistent increasing quasi-concave value function}

\author{Majid Soleimani-damaneh$^{a,}\footnote{Corresponding
 author.~E-mail: soleimani@khayam.ut.ac.ir.},~~$ Latif Pourkarimi$^{b},~~$\\Pekka J. Korhonen$^{c},~~$Jyrki Wallenius$^{c}$}
\date{}

\maketitle  {\scriptsize $^a$ School of Mathematics, Statistics and
Computer Science, College of Science, University of Tehran, Tehran, Iran.\\ \textcolor{blue}{~~~~~~~~~E-mail: soleimani@khayam.ut.ac.ir}}

{\scriptsize $^b$ Department of Mathematics, Faculty of Sciences, Razi University, Kermanshah, Iran.\\ \textcolor{blue}{~~~~~~~~~E-mail: lp\_karimi@yahoo.com}}

{\scriptsize $^c$ Department of Information and Service Management, Aalto University School of Business, Helsinki, Finland.\\ \textcolor{blue}{~~~~~~~~~E-mail: pekka.korhonen@aalto.fi,~jyrki.wallenius@aalto.fi}}

\maketitle

\begin{abstract}
Existence of an increasing quasi-concave value function consistent with given preference information is an important issue in various fields including Economics, Multiple Criteria Decision Making, and Applied Mathematics. In this paper, we establish necessary and sufficient conditions for existence of a value function satisfying aforementioned properties. This leads to an operational, tractable and easy to use test for checking the existence of a desirable value function. In addition to developing the existence test, we construct consistent linear and non-linear desirable value functions. \\\\
\textbf{\textit{Keywords}}: \textit{Multiple criteria analysis; Quasi-Concavity; Value function; Preference information; Distance function.}
\end{abstract}

\section{Introduction}

Assume that $m$ alternatives represented by $p$-vectors
$$\mathbf{x}_1,\mathbf{x}_2,\ldots,\mathbf{x}_m\in\mathbb{R}^p$$
are given. Suppose that there are no duplicate alternatives, i.e., $\mathbf{x}_i\neq \mathbf{x}_j$ for $i\neq j.$ These vectors can represent different objects in different fields. In Economics, these can be some features of a good, or some characteristics of the economy of a country, etc. In Multiple Criteria Decision Making (MCDM), these are called alternatives and a Decision Maker (DM) is going to rank them and/or choose a most preferred one among them. In Support Vector Machine (SVM) approach (in machine learning), these vectors are corresponding to a training dataset.

Set $\Theta:=\{\mathbf{x}_1,\mathbf{x}_2,\ldots,\mathbf{x}_m\}$. Assume that we have the following pairwise judgements on some selected members of $\Theta$:
\begin{equation}\label{1}
\mathbf{x}_j\succ \mathbf{x}_k,~~\forall j=1,2,\ldots,
t,\end{equation}
where $t<m$ and $k\notin\{1,2,\ldots,t\}$. Here, $\mathbf{x}_j\succ \mathbf{x}_k$ means $\mathbf{x}_j$ is preferred to $\mathbf{x}_k$ by Decision Maker (DM) or planner or Policy Designer (PD). The vector $\mathbf{x}_k$ is fixed. Such
relations can be derived from the value judgements provided by the DM, planner or PD. Here, we assume more is better (In optimization language, our problem is maximization).

Several approaches in the literature of the aforementioned fields have been constructed and applied assuming the existence of an increasing quasi-concave value function consistent with preference information given in (\ref{1}). In fact, quasi-concavity has been introduced by economists and has a central role in economic theory \cite{sam}. In this paper, we concentrate on the existence of such a desirable function consistent with preference information given in (\ref{1}). Precisely speaking, the question is whether there exists an increasing quasi-concave value function $f:\mathbb{R}^p\to\mathbb{R}$ such that
\begin{equation}\label{2}
f(\mathbf{x}_j)>f(\mathbf{x}_k),~~\forall j=1,2,\ldots,
t.\end{equation}

\begin{definition}\label{def}
A function $f:\mathbb{R}^p\longrightarrow \mathbb{R}$ is said to be
\begin{itemize}
\item[(i)] quasi-concave if $f(\lambda
\mathbf{x}+(1-\lambda)\mathbf{y})\geq \min\{f(\mathbf{x}),f(\mathbf{y})\}$ for each $\mathbf{x},\mathbf{y}\in \mathbb{R}^p$ and
each $\lambda\in [0,1]$.
\item[(ii)] increasing if $\mathbf{x}\leq \mathbf{y}$ implies $f(\mathbf{x})\leq f(\mathbf{y})$; Hereafter, the vector-inequalities $\geq,~>,~\leq,$ and $<$ are understood componentwise.
\end{itemize}
\end{definition}

The existence of a value function with some pre-specified properties is a very old and important problem; See, e.g., Birkhoff \cite{bir}, Debreu \cite{deb}, Fishburn \cite{fis}, Karsu \cite{kar}, Keeney and Raiffa \cite{kee}, K\"{o}ksalan et al. \cite{kok}, Korhonen et al. \cite{KMW,kor-1,kor-2017,kor-2}, Nasrabadi et al. \cite{nas}, and Zionts et al. \cite{zio}.  To the best of our knowledge, except for Korhonen et al. \cite{kor-2017}, there is no study about the existence of quasi-concave order-preserving value functions. In the current work, we provide a necessary and sufficient condition for existence of an increasing quasi-concave value function satisfying (\ref{2}). This leads to an operational and easy to use test for checking the consistency of preference information (\ref{1}) with quasi-concavity. In addition to the existence results, we construct consistent linear and non-linear desirable value functions. Section 2 provides some preliminaries. Section 3 is devoted to the main results. Some concluding remarks are given in Section 4.

\section{Some Preliminaries}

As mentioned in the preceding section, in our paper, the vector-inequalities $\geq,~>,~\leq,$ and $<$ are defined componentwise. That is, $\mathbf{x}_1\geq \mathbf{x}_2$ stands for
$x_{i1}\geq x_{i2}$ for each $i=1,2,\ldots,p;$ and analogously $\mathbf{x}_1>\mathbf{x}_2$ means
$x_{i1}>x_{i2}$ for each $i=1,2,\ldots,p$. The vectors are denoted by bolded letters, while non-bold letters are used for scalars. The zero vector is denoted by $\mathbf{0}$ without mentioning its dimension. The non-negative orthant in $\mathbb{R}^p$ is $$\mathbb{R}^p_+:=\{\mathbf{x}\in \mathbb{R}^p:\mathbf{x}\geq \mathbf{0}\}.$$ The positive orthant is  $$\mathbb{R}^p_{++}:=\{\mathbf{x}\in \mathbb{R}^p:\mathbf{x}>\mathbf{0}\}.$$

The norm used in the whole paper is the Euclidean norm.

\begin{definition}
Given a nonempty set $\Omega\subset \mathbb{R}^p$, the distance function corresponding to this set is a function $dist(\cdot;\Omega):\mathbb{R}^p\rightarrow \mathbb{R}$ defined by $$dist(\mathbf{x};\Omega):=\displaystyle\inf_{\omega\in\Omega} \|\mathbf{x}-\mathbf{\omega}\|,~~\mathbf{x}\in\mathbb{R}^p.$$
\end{definition}

It can be seen that $dist(\mathbf{x};\Omega)\geq 0$ for each $\mathbf{x}\in\mathbb{R}^p$; and $dist(\mathbf{x};\Omega)=0$ if and only if $\mathbf{x}\in cl\,\Omega.$ The notation $cl\,\Omega$ denotes the closure of $\Omega$.

\begin{definition}
A function $f:\mathbb{R}^p\longrightarrow \mathbb{R}$ is said to be
\begin{itemize}
\item[(i)] Lipschitz if there exists some $L>0 $ such that $$|f(\mathbf{x})-f(\mathbf{y})|\leq L\|\mathbf{x}-\mathbf{y}\|,~~\forall \mathbf{x},\mathbf{y}\in\mathbb{R}^n.$$
\item[(ii)] concave if $f(\lambda
\mathbf{x}+(1-\lambda)\mathbf{y})\geq \lambda f(\mathbf{x})+(1-\lambda)f(\mathbf{y})$ for each $\mathbf{x},\mathbf{y}\in \mathbb{R}^p$ and
each $\lambda\in [0,1]$.
\end{itemize}
\end{definition}

It can be seen that each concave function is quasi-concave, and each Lipschitz function is continuous.

\section{Main Results}
\subsection{A constructive Approach}

Set
\begin{equation}\label{011new}
D:=\{\mathbf{z}\in
\mathbb{R}^p:~\mathbf{z}=\displaystyle\sum_{j=1}^{t}\lambda_j(\mathbf{x}_j-\mathbf{x}_k),~\lambda_j\geq
0;~j=1,2,\ldots,t\}
\end{equation}
and
\begin{equation}\label{01new}
E:=D+\mathbb{R}_+^p.
\end{equation}
Indeed, $D$ is the convex cone generated by $\mathbf{x}_1-\mathbf{x}_k,\mathbf{x}_2-\mathbf{x}_k,\ldots,\mathbf{x}_t-\mathbf{x}_k$, and $E$ is obtained by adding the first orthant to $D$. These two cones, illustrated in Figure 1, play a vital role in the rest of the paper. In Figure 1, $t=p=2$.

The main idea underlying defining $D$ comes from Jensen's inequality for quasi-concave functions which says: If $f:\mathbb{R}^p\to\mathbb{R}$ is quasi-concave, then $$f(\sum_{i=1}^n\alpha_i\mathbf{x}_i)\geq \min\{f(\mathbf{x}_1),f(\mathbf{x}_2),\ldots,f(\mathbf{x}_n)\},$$ for each $n\in \mathbb{N}$, each $\mathbf{x}_1,\mathbf{x}_2,\ldots,\mathbf{x}_n\in\mathbb{R}^p$, and each $\alpha_1,\alpha_2,\ldots,\alpha_n\geq 0$ with $\sum_{i=1}^n\alpha_i=1$.

As in our setting more is better, the members of $E$ which are located outside of $D$ are at least as good as those in $D$. Mathematically speaking, as value function $f$ is increasing, we have $f(\mathbf{x})\geq f(\mathbf{x}')$ for each $\mathbf{x}\in E\backslash D$ and each $\mathbf{x}'\in D.$

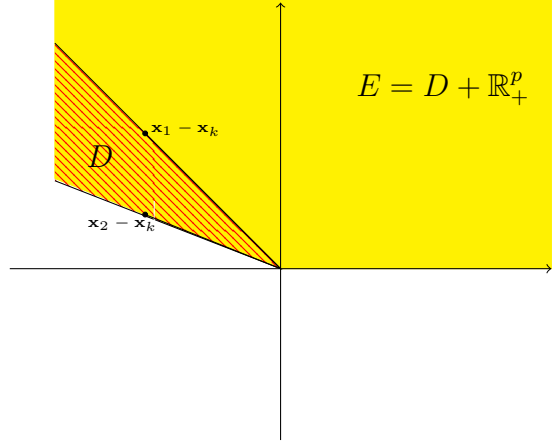
\begin{figure}[H]\label{Fig11}
\hspace{3cm}
\begin{tikzpicture}[scale=0.60,domain=0:3]


\coordinate (A) at (-6,2);
\coordinate (B) at (-6,6);
\coordinate (C) at (5,6);
\coordinate (D) at (-1,0);
\coordinate (E) at (5,0);


\draw[fill=red,yellow]  (A)-- coordinate[pos=.6] (AB) (B)--(C)
         --coordinate[pos=.45] (BC) (C)--(E)
         --coordinate[pos=.55] (CE) (E)--(D)
         --coordinate[pos=.55] (ED) (D)--(A)
         --coordinate[pos=.55] (DA) (A);
\coordinate (A) at (-6,1.95);
\coordinate (B) at (-6,5);
\coordinate (C) at (-1,0);


\draw[pattern=north west lines, pattern color=red]  (A)-- coordinate[pos=.06, color=white] (AC) (C)--(B)
         --coordinate[pos=.05, color=white] (CB) (B);

          \draw[white]    plot [domain = 0:7, samples = 300](2.35-\x,-4) ;
       \draw[white]    plot [domain = 0:7, samples = 300](-3.8,1.5-\x) ;


       \draw[black]    plot [domain = 1:4, samples = 300](-\x,\x-1) ;
       \draw[black]    plot [domain = 1:4, samples = 300](-\x,0.4*\x-0.4) ;



\draw[color=black] (-3.1,3.1) node {\tiny{$\mathbf{x}_1-\mathbf{x}_k$}};
\draw [fill=black] (-4,3) circle (1.5pt);
\draw[color=black] (-4.5,1) node {\tiny{$\mathbf{x}_2-\mathbf{x}_k$}};
\draw [fill=black] (-4,1.2) circle (1.5pt);

\draw[color=black] (2.6,4) node {$E=D+\mathbb{R}^p_+$};
\draw[color=black] (-5,2.5) node {$D$};
    \draw[->] (-7,0) -- (5,0) node[right] {};
    \draw[->] (-1,-3.8) -- (-1,5.9) node[above] {};
\draw[color=black] (0,-4.9) node {};

\end{tikzpicture}
\caption{Illustration of two cones $D$ and $E$.}
\end{figure}

Now, we define a function, $\psi:\mathbb{R}^p\rightarrow \mathbb{R}$, invoking the distance function corresponding to $E$, as follows:
\begin{equation}\label{valuefunc}
\psi(\mathbf{x}):=dist(\mathbf{x}-\mathbf{x}_k; \mathbb R^p\setminus E)-dist(\mathbf{x}-\mathbf{x}_k; E).
\end{equation}

Without loss of generality, we assume both $E$ and $\mathbb R^p\setminus E$ are nonempty.

The following example is provided to clarify Eq. (\ref{valuefunc}). Indeed, the function defined in (\ref{valuefunc}) calculates the closeness and remoteness of $\mathbf{x}$ to $\mathbf{x}_k+E$. When $\mathbf{x}\in \mathbf{x}_k+E$, the value of $\psi(\mathbf{x})$ is nonnegative, while it is negative otherwise. For the members of $\mathbf{x}_k+E$, the function $\psi(\cdot)$ calculates their distance to the boundary of this set.

\begin{example}\label{ex1}
Assume $p=2,~t=3,~\mathbf{x}_k=\left(\begin{matrix}
1\\
1
\end{matrix}\right)~\mathbf{x}_1=\left(\begin{matrix}
0\\
2
\end{matrix}\right)~\mathbf{x}_2=\left(\begin{matrix}
0\\
1.5
\end{matrix}\right)$, and $\mathbf{x}_3=\left(\begin{matrix}
0\\
3
\end{matrix}\right).$ The given preference information is as $\mathbf{x}_j\succ \mathbf{x}_k,~j=1,2,3$. The cone $E$ for this example is depicted in Figure 2. We examine three values for vector $\mathbf{x}$ as follows.
\begin{itemize}
\item $\mathbf{x}^0=\left(\begin{matrix}
-2\\
-2
\end{matrix}\right):$ From Figure 2, it is seen that $dist(\mathbf{x}^0-\mathbf{x}_k; \mathbb R^p\setminus E)=0$ and $dist(\mathbf{x}^0-\mathbf{x}_k; E)=\sqrt{18}$. So, $\psi(\mathbf{x}^0)=-\sqrt{18}$.

\item $\mathbf{x}^*=\left(\begin{matrix}
3\\
3
\end{matrix}\right):$ Due to Figure 2, $dist(\mathbf{x}^*-\mathbf{x}_k; \mathbb R^p\setminus E)=2$ and $dist(\mathbf{x}^*-\mathbf{x}_k; E)=0$. Thus, $\psi(\mathbf{x}^*)=2$.

\item $\bar{\mathbf{x}}=\left(\begin{matrix}
2\\
1
\end{matrix}\right):$ From Figure 2, it is seen that $dist(\bar{\mathbf{x}}-\mathbf{x}_k; \mathbb R^p\setminus E)=dist(\bar{\mathbf{x}}-\mathbf{x}_k; E)=0$. So, $\psi(\bar{\mathbf{x}})=0$.

\end{itemize}

\end{example}

\begin{figure}[H]\label{Fig11}
\hspace{3cm}
\begin{tikzpicture}[scale=0.60,domain=0:3]

\coordinate (A) at (-6,2);
\coordinate (B) at (-6,6);
\coordinate (C) at (5,6);
\coordinate (D) at (-1,0);
\coordinate (E) at (5,0);

\draw[fill=red,green]  (A)-- coordinate[pos=.6] (AB) (B)--(C)
         --coordinate[pos=.45] (BC) (C)--(E)
         --coordinate[pos=.55] (CE) (E)--(D)
         --coordinate[pos=.55] (ED) (D)--(A)
         --coordinate[pos=.55] (DA) (A);

\coordinate (A) at (-6,1.95);
\coordinate (B) at (-6,5);
\coordinate (C) at (-1,0);



\draw[black]    plot [domain = 1:6, samples = 300](-\x,0.4*\x-0.4) ;

 \draw [fill=black] (0,1) circle (1.5pt);
 \draw[color=black] (0.5,1) node {\tiny{$\mathbf{x}_k$}};

\draw [fill=black] (-1,1.5) circle (1.5pt);
 \draw[color=black] (-0.6,1.5) node {\tiny{$\mathbf{x}_2$}};

\draw [fill=black] (-1,2) circle (1.5pt);
 \draw[color=black] (-0.6,2.1) node {\tiny{$\mathbf{x}_1$}};

\draw [fill=black] (-1,3) circle (1.5pt);
 \draw[color=black] (-0.6,3.1) node {\tiny{$\mathbf{x}_3$}};


\draw [fill=black] (-2,1) circle (1.5pt);
 \draw[color=black] (-2.9,1.3) node {\tiny{$\mathbf{x}_1-\mathbf{x}_k$}};

\draw [fill=black] (-2,.4) circle (1.5pt);
 \draw[color=black] (-3,0.4) node {\tiny{$\mathbf{x}_2-\mathbf{x}_k$}};

\draw [fill=black] (-2,2) circle (1.5pt);
 \draw[color=black] (-2.9,2.1) node {\tiny{$\mathbf{x}_3-\mathbf{x}_k$}};

\draw[color=black] (2.6,4) node {$E$};
    \draw[->] (-7,0) -- (5,0) node[right] {};
    \draw[->] (-1,-3.8) -- (-1,5.9) node[above] {};
\draw[color=black] (0,-4.9) node {};

\draw [fill=black] (-4,-3) circle (1.5pt);
 \draw[color=black] (-5,-3) node {\tiny{$\mathbf{x}^0-\mathbf{x}_k$}};

 \draw [fill=black] (1,2) circle (1.5pt);
 \draw[color=black] (2,2) node {\tiny{$\mathbf{x}^*-\mathbf{x}_k$}};

\draw [fill=black] (0,0) circle (1.5pt);
 \draw[color=black] (0.4,-0.3) node {\tiny{$\bar{\mathbf{x}}-\mathbf{x}_k$}};

\end{tikzpicture}
\caption{Illustration of the value function $\psi(\cdot)$ in Example \ref{ex1}.}
\end{figure}
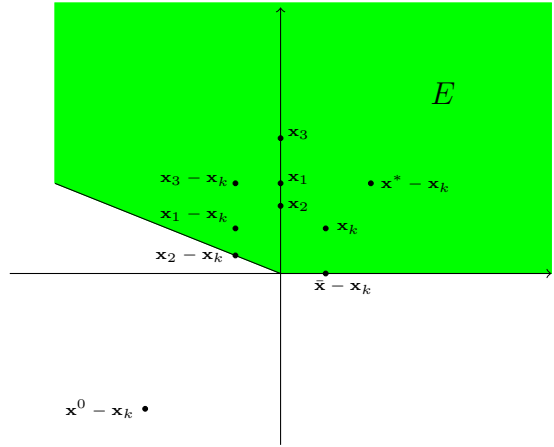

Notice that, we have constructed the function $\psi(\cdot)$ only to show that a continuous quasi-concave increasing value function with some desirable properties exists (See Theorem \ref{main1} and its corollaries below). Here, we neither intend to rank the alternatives by using $\psi(\cdot)$ function nor to study the properties of this function from a ranking standpoint. 

Theorem \ref{main1} below, shows that the function $\psi(\cdot)$ defined above, enjoys some of the important properties that we are looking for. Hereafter, two notations $int$ and $bd$ stand for the interior and boundary of a set, respectively.

\begin{thm}\label{main1} Consider the function $\psi(\cdot)$ defined in (\ref{valuefunc}). The following propositions hold.
\begin{enumerate}
\item[(i)] $\psi(\cdot)$ is a Lipschitz function.
\item[(ii)] For each $\mathbf{x}$,
$$\psi(\mathbf{x})\left\{\begin{array}{ll}
>0, & \textmd{ if } \mathbf{x}\in int\,(\mathbf{x}_k+E)\\
=0, & \textmd{ if } \mathbf{x}\in bd\,(\mathbf{x}_k+E)\\
<0, & \textmd{ if } \mathbf{x}\in int\,\Big(\mathbb{R}^p\setminus (\mathbf{x}_k+E)\Big)
\end{array}\right.$$  
\item[(iii)]  $\psi(\cdot)$ is a concave function.
\item[(iv)] $\psi(\cdot)$ is increasing  w.r.t. $E$, i.e., if $\mathbf{x}_1,\mathbf{x}_2\in\mathbb R^p$ with $\mathbf{x}_2-\mathbf{x}_1\in E$ (resp. $\mathbf{x}_2-\mathbf{x}_1\in int\,E$), then $\psi(\mathbf{x}_1)\leq \psi(\mathbf{x}_2)$ (resp. $\psi(\mathbf{x}_1)< \psi(\mathbf{x}_2)$).
\end{enumerate}
\end{thm}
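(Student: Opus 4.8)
The plan is to treat $E$ throughout as a closed convex cone: it is finitely generated (the generators of $D$ together with the standard unit vectors generate $E$), so by the Weyl--Minkowski theorem it is polyhedral, hence closed and convex, and since $E\supseteq\mathbb R^p_+$ it has nonempty interior. Because $\mathbf x\mapsto\mathbf x-\mathbf x_k$ is an isometry, every claim reduces to a statement about the signed distance function $\sigma_E(\mathbf y):=dist(\mathbf y;\mathbb R^p\setminus E)-dist(\mathbf y;E)$ evaluated at $\mathbf y=\mathbf x-\mathbf x_k$.

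For (i) I would invoke the standard fact that $\mathbf y\mapsto dist(\mathbf y;\Omega)$ is $1$-Lipschitz for any nonempty $\Omega$ (an immediate consequence of the triangle inequality inside the infimum); hence each distance term is $1$-Lipschitz, their difference is $2$-Lipschitz, and precomposition with the translation preserves this. For (ii) I would use that $E$ is closed, so $\mathbb R^p\setminus E$ is open and the three regions $int(\mathbf x_k+E)$, $bd(\mathbf x_k+E)$, $int(\mathbb R^p\setminus(\mathbf x_k+E))$ partition $\mathbb R^p$. On the interior, $\mathbf y\in E$ forces $dist(\mathbf y;E)=0$ while a small ball about $\mathbf y$ lies in $E$, making $dist(\mathbf y;\mathbb R^p\setminus E)>0$; on the boundary both distances vanish; and in the interior of the complement, $\mathbf y\notin E=cl\,E$ gives $dist(\mathbf y;E)>0$ while $dist(\mathbf y;\mathbb R^p\setminus E)=0$. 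The three sign statements follow.

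The crux is (iii), and I would route it through the representation $\sigma_E(\mathbf y)=\inf_{\|\mathbf u\|=1}\big(h_E(\mathbf u)-\langle\mathbf u,\mathbf y\rangle\big)$, where $h_E(\mathbf u)=\sup_{\mathbf c\in E}\langle\mathbf u,\mathbf c\rangle$ is the support function. Once this identity holds, $\sigma_E$ is an infimum of affine functions of $\mathbf y$, hence concave, and concavity of $\psi$ follows by the isometry. Proving the identity is the main obstacle, and I would split it into two cases. If $\mathbf y\notin E$, take its projection $\mathbf y^\ast$ onto $E$ and set $\mathbf u^\ast=(\mathbf y-\mathbf y^\ast)/\|\mathbf y-\mathbf y^\ast\|$; the projection inequality shows $\mathbf u^\ast$ supports $E$ at $\mathbf y^\ast$, giving $h_E(\mathbf u^\ast)-\langle\mathbf u^\ast,\mathbf y\rangle=-dist(\mathbf y;E)$, while Cauchy--Schwarz yields $h_E(\mathbf u)-\langle\mathbf u,\mathbf y\rangle\geq -dist(\mathbf y;E)$ for every unit $\mathbf u$. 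If $\mathbf y\in E$, the number $dist(\mathbf y;\mathbb R^p\setminus E)$ is the radius of the largest ball about $\mathbf y$ contained in $E$; containment of that ball in each supporting half-space gives $h_E(\mathbf u)-\langle\mathbf u,\mathbf y\rangle\geq dist(\mathbf y;\mathbb R^p\setminus E)$, and a supporting hyperplane at the nearest boundary point delivers equality. (Alternatively one can show directly that the in-radius map $\mathbf y\mapsto dist(\mathbf y;\mathbb R^p\setminus E)$ is concave on $E$ by Minkowski-averaging two inscribed balls, but gluing this to $-dist(\cdot;E)$ across $bd\,E$ is precisely what the infimum representation handles automatically.)

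Finally, for (iv) I would exploit that $E$ is a cone, so $E+E\subseteq E$. Writing $\mathbf d=\mathbf x_2-\mathbf x_1\in E$, the inclusion $\{\mathbf e+\mathbf d:\mathbf e\in E\}\subseteq E$ gives $dist(\mathbf x_2-\mathbf x_k;E)\leq dist(\mathbf x_1-\mathbf x_k;E)$, while the dual inclusion $(\mathbb R^p\setminus E)-\mathbf d\subseteq\mathbb R^p\setminus E$ gives $dist(\mathbf x_2-\mathbf x_k;\mathbb R^p\setminus E)\geq dist(\mathbf x_1-\mathbf x_k;\mathbb R^p\setminus E)$; subtracting yields $\psi(\mathbf x_1)\leq\psi(\mathbf x_2)$. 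For the strict statement I would return to the representation: since $h_E\equiv 0$ on the polar cone $E^\circ=\{\mathbf u:\langle\mathbf u,\mathbf c\rangle\leq 0\ \forall\mathbf c\in E\}$ and $h_E\equiv+\infty$ off it, one has $\sigma_E(\mathbf y)=\inf_{\mathbf u\in E^\circ,\,\|\mathbf u\|=1}(-\langle\mathbf u,\mathbf y\rangle)$. If $\mathbf d\in int\,E$, then $\langle\mathbf u,\mathbf d\rangle<0$ for every nonzero $\mathbf u\in E^\circ$, and as the unit sphere of $E^\circ$ is compact and nonempty (because $E\neq\mathbb R^p$), the continuous map $\mathbf u\mapsto-\langle\mathbf u,\mathbf d\rangle$ attains a strictly positive minimum $\delta$; hence $\sigma_E(\mathbf x_2-\mathbf x_k)\geq\sigma_E(\mathbf x_1-\mathbf x_k)+\delta$, the desired strict inequality. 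The only delicate point is this compactness argument producing a uniform gap $\delta$, since the pointwise strict inequality alone would not survive passage to the infimum.
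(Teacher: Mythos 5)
Your proof is correct, and for the two substantive parts it takes a genuinely different (more self-contained) route than the paper. Parts (i) and (ii) coincide with the paper's argument: the paper likewise bounds $|\psi(\mathbf{x})-\psi(\mathbf{y})|$ by $2\|\mathbf{x}-\mathbf{y}\|$ using the $1$-Lipschitz property of distance functions, and derives the sign trichotomy from the same closure/interior dichotomy. The difference is in (iii) and (iv): the paper simply observes that $\psi$ is a translate of the oriented distance function $\varphi(\mathbf{y})=dist(\mathbf{y};\mathbb{R}^p\setminus E)-dist(\mathbf{y};E)$ and cites Zaffaroni's Proposition 3.2 for both the concavity of $\varphi$ (on a closed convex set) and its monotonicity with respect to $E$, whereas you prove both from scratch via the support-function representation $\sigma_E(\mathbf{y})=\inf_{\mathbf{u}\in E^{\circ},\,\|\mathbf{u}\|=1}\bigl(-\langle\mathbf{u},\mathbf{y}\rangle\bigr)$, using that $h_E$ vanishes on the polar cone of the cone $E$ and is $+\infty$ elsewhere. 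Your two-case verification of that identity (projection vector when $\mathbf{y}\notin E$, in-radius plus supporting hyperplane when $\mathbf{y}\in E$) is sound, as is the elementary $E+E\subseteq E$ argument for the weak monotonicity in (iv) and the compactness argument extracting a uniform gap $\delta>0$ for the strict case (correctly noting that $\{\mathbf{u}\in E^{\circ}:\|\mathbf{u}\|=1\}$ is nonempty precisely because $E\neq\mathbb{R}^p$). What your route buys is a fully self-contained proof that also makes explicit why $E$ must be closed and convex (Weyl--Minkowski), facts the paper uses only implicitly; what the paper's route buys is brevity, at the cost of leaning on an external reference for the heart of the theorem.
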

\begin{proof}
\begin{enumerate}
\item[(i)] Let $\mathbf{x},\mathbf{y}\in\mathbb{R}^p$. As distance function is a Lipschitz function with modulus 1 (see \cite{bec}), we have
$$\begin{array}{ll}
|\psi(\mathbf{x})-\psi(\mathbf{y})|&\leq |dist(\mathbf{x}-\mathbf{x}_k; \mathbb R^p\setminus E)-dist(\mathbf{y}-\mathbf{x}_k; \mathbb R^p\setminus E)|\\
&~~~+|dist(\mathbf{y}-\mathbf{x}_k; E)-dist(\mathbf{x}-\mathbf{x}_k; E)|\\
&\leq \|\mathbf{x}-\mathbf{y}\|+\|\mathbf{x}-\mathbf{y}\|=2\|\mathbf{x}-\mathbf{y}\|,
\end{array}$$
and the desired result is derived.
\item[(ii)] If $\mathbf{x}\in int\,(\mathbf{x}_k+E)$, then $\mathbf{x}-\mathbf{x}_k\in int\,E$ while $\mathbf{x}-\mathbf{x}_k\notin cl\,(\mathbb R^p\setminus E)$ . So, we have $dist(\mathbf{x}-\mathbf{x}_k; E)=0$ and $dist(\mathbf{x}-\mathbf{x}_k; \mathbb R^p\setminus E)>0$, leading to $\psi(\mathbf{x})>0$.

If $\mathbf{x}\in bd\,(\mathbf{x}_k+E)$, then $\mathbf{x}-\mathbf{x}_k\in cl\,E$ and $\mathbf{x}-\mathbf{x}_k\in cl\,(\mathbb R^p\setminus E)$ . So, we have $dist(\mathbf{x}-\mathbf{x}_k; E)=dist(\mathbf{x}-\mathbf{x}_k; \mathbb R^p\setminus E)=0$, leading to $\psi(\mathbf{x})=0$.

The last case is proved analogously.
\item[(iii)] By defining $\varphi(\mathbf{y}):=dist(\mathbf{y}; \mathbb R^p\setminus E)-dist(\mathbf{y}; E)$ for $\mathbf{y}\in\mathbb{R}^p$, the function $\varphi(\cdot)$ is concave due to \cite[Proposition 3.2]{zaf}. On the other hand, $\psi(\mathbf{x})=\varphi(\mathbf{x}-\mathbf{x}_k),~\mathbf{x}\in\mathbb{R}^p$. So, $\psi(\cdot)$ is a concave function.
\item[(iv)] Let $\mathbf{x}_1, \mathbf{x}_2\in\mathbb R^p$ with $\mathbf{x}_2- \mathbf{x}_1\in E$ (resp. $\mathbf{x}_2- \mathbf{x}_1\in int\, E$). Then $\mathbf{x}_2-\mathbf{x}_k-(\mathbf{x}_1-\mathbf{x}_k)\in E$ (resp. $\mathbf{x}_2-\mathbf{x}_k-(\mathbf{x}_1-\mathbf{x}_k)\in int\,E$). Hence, due to \cite[Proposition 3.2]{zaf}, we have $\varphi(\mathbf{x}_2-\mathbf{x}_k)\geq \varphi(\mathbf{x}_1-\mathbf{x}_k),$ (resp. $\varphi(\mathbf{x}_2-\mathbf{x}_k)>\varphi(\mathbf{x}_1-\mathbf{x}_k)$), where $\varphi(\cdot)$ is as defined in the proof of the preceding part. Therefore, $\psi(\mathbf{x}_2)\geq\psi(\mathbf{x}_1)$ (resp. $\psi(\mathbf{x}_2)>\psi(\mathbf{x}_1)$), and the proof is complete.
\end{enumerate}
\end{proof}

The following corollary is a consequence of Theorem \ref{main1}.

\begin{corollary}\label{corNew1} The function $\psi(\cdot)$ defined in (\ref{valuefunc}) is a continuous quasi-concave increasing function satisfying
\begin{equation}\label{1we}
\psi(\mathbf{x}_j)\geq \psi(\mathbf{x}_k),~~\forall j=1,2,\ldots,t.
\end{equation}
\end{corollary}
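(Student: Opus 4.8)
The plan is to obtain each of the four asserted properties directly from the corresponding item of Theorem \ref{main1}, combined with two elementary structural facts about the cone $E$ defined in (\ref{011new})--(\ref{01new}). Two of the four properties are immediate: by Theorem \ref{main1}(i) the function $\psi(\cdot)$ is Lipschitz and hence continuous (every Lipschitz function is continuous, as recorded in Section 2), and by Theorem \ref{main1}(iii) the function $\psi(\cdot)$ is concave and hence quasi-concave (every concave function is quasi-concave, again recorded in Section 2).

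For monotonicity in the sense of Definition \ref{def}(ii), I would first record the inclusion $\mathbb{R}^p_+\subseteq E$. Taking $\lambda_1=\cdots=\lambda_t=0$ in (\ref{011new}) shows $\mathbf{0}\in D$, whence $E=D+\mathbb{R}^p_+\supseteq\mathbf{0}+\mathbb{R}^p_+=\mathbb{R}^p_+$. Consequently, if $\mathbf{x}\leq\mathbf{y}$ then $\mathbf{y}-\mathbf{x}\in\mathbb{R}^p_+\subseteq E$, and Theorem \ref{main1}(iv) immediately yields $\psi(\mathbf{x})\leq\psi(\mathbf{y})$; thus $\psi(\cdot)$ is increasing. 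The role of this step is precisely to pass from the $E$-monotonicity established in part (iv) to the componentwise monotonicity demanded by the definition.

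For the preference inequalities (\ref{1we}), the key observation is that each generator difference lies in $E$: for every $j\in\{1,\ldots,t\}$, choosing $\lambda_j=1$ and $\lambda_i=0$ for $i\neq j$ in (\ref{011new}) gives $\mathbf{x}_j-\mathbf{x}_k\in D\subseteq E$. Applying Theorem \ref{main1}(iv) to the pair $\mathbf{x}_k,\mathbf{x}_j$, for which $\mathbf{x}_j-\mathbf{x}_k\in E$, produces $\psi(\mathbf{x}_k)\leq\psi(\mathbf{x}_j)$, which is exactly (\ref{1we}). (Alternatively, one could note that $\mathbf{x}_j\in\mathbf{x}_k+E$ gives $\psi(\mathbf{x}_j)\geq 0$ by part (ii), while $\mathbf{0}\in\mathrm{bd}\,E$ gives $\psi(\mathbf{x}_k)=0$, but the route through part (iv) is cleaner.)

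Since every step is a one-line deduction from Theorem \ref{main1}, I expect no genuine obstacle in this corollary. The only two points requiring a moment's care are the membership facts $\mathbb{R}^p_+\subseteq E$ and $\mathbf{x}_j-\mathbf{x}_k\in E$; these are what allow the abstract $E$-monotonicity of part (iv) to be specialized, respectively, to the componentwise order and to the given pairwise judgements.
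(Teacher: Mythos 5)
Your proposal is correct, and for three of the four properties (continuity via Lipschitzness, quasi-concavity via concavity, and the componentwise increasing property via $\mathbb{R}^p_+\subseteq E$ and Theorem \ref{main1}(iv)) it coincides with the paper's proof. The one genuine difference is in how you obtain (\ref{1we}): you apply the $E$-monotonicity of Theorem \ref{main1}(iv) to the pair $\mathbf{x}_k,\mathbf{x}_j$, using $\mathbf{x}_j-\mathbf{x}_k\in D\subseteq E$, which gives $\psi(\mathbf{x}_k)\leq\psi(\mathbf{x}_j)$ in one line. The paper instead routes through the sign characterization of Theorem \ref{main1}(ii): it notes $\mathbf{x}_j\in\mathbf{x}_k+E$ forces $\psi(\mathbf{x}_j)\geq 0$, and then must separately argue that $\mathbf{x}_k\in bd\,(\mathbf{x}_k+E)$ --- otherwise $\mathbf{0}\in int\,E$ would give $E=\mathbb{R}^p$, contradicting the standing assumption that $\mathbb{R}^p\setminus E$ is nonempty --- so that $\psi(\mathbf{x}_k)=0$. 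Your route is the more economical of the two, as it sidesteps the boundary argument entirely (this is essentially the alternative you mention parenthetically and correctly flag as requiring the extra step). What the paper's longer route buys is the explicit identity $\psi(\mathbf{x}_k)=0$, which mirrors the reasoning reused later for the perturbed function $\vartheta$ in (\ref{3we}), where the analogous boundary fact is derived from pointedness of $E^{\bar\epsilon}$. Both arguments are valid under the paper's blanket assumption that $E$ and $\mathbb{R}^p\setminus E$ are nonempty, which is needed anyway for $\psi$ to be well defined.
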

\begin{proof}
Continuity of $\psi(\cdot)$ comes from Theorem  \ref{main1}(i), because each Lipschitz function is continuous.  Quasi-concavity is derived from Theorem \ref{main1}(iii) because each concave function is quasi-concave. The function $\psi(\cdot)$ is increasing, because considering $\mathbf{x}_1,\mathbf{x}_2\in\mathbb{R}^p$ with $\mathbf{x}_1\leq \mathbf{x}_2$, we have $\mathbf{x}_2-\mathbf{x}_1\in\mathbb{R}^p_+\subseteq E$, and hence, by Theorem \ref{main1}(iv), we get   $\psi(\mathbf{x}_2)\geq\psi(\mathbf{x}_1)$. To prove (\ref{1we}), let $j\in\{1,2,\ldots,t\}$ be arbitrary. Evidently, $\mathbf{x}_j\in \mathbf{x}_k+E$ and so, according to Theorem \ref{main1}(ii), $\psi(\mathbf{x}_j)\geq 0$. On the other hand, $\mathbf{x}_k\in bd(\mathbf{x}_k+E)$, because otherwise we have $\mathbf{x}_k\in int(\mathbf{x}_k+E)$ which implies $0\in int\,E$, and then $E=\mathbb{R}^p$. This contradicts the nonemptiness of $\mathbb{R}^p\setminus E$. Therefore, $\mathbf{x}_k\in bd(\mathbf{x}_k+E)$. Thus, by Theorem \ref{main1}(ii), we get $\psi(\mathbf{x}_k)=0$. This implies $\psi(\mathbf{x}_j)\geq \psi(\mathbf{x}_k)$, and the proof is complete.
\end{proof}

Corollary \ref{corNew1} shows that the function $\psi(\cdot)$ defined in (\ref{valuefunc}) enjoys all but one of the properties that we are looking for. In fact, $\psi(\cdot)$ is a continuous quasi-concave increasing value function fulfilling (\ref{1we}).  If we are satisfied with (\ref{1we}), i.e., we consider (\ref{1we}) as a suitable relation for representing the pairwise judgements (\ref{1}), the value function $\psi(\cdot)$ is desirable.
Notice that, in Corollary \ref{corNew1}, we proved the existence of a continuous increasing quasi-concave value function satisfying (\ref{1we}), without any assumption. If one insists to have a continuous increasing quasi-concave value function satisfying a strict version of (\ref{1we}), as
\begin{equation}\label{2we}
\psi(\mathbf{x}_j)>\psi(\mathbf{x}_k),~~\forall j=1,2,\ldots,t,
\end{equation}
to represent (\ref{1}), we construct such a function by  perturbing the cone $E$ as follows.
We assume that $E$ is pointed (i.e., $E\cap(-E)=\{0\}$), and we will show that (surprisingly!) this assumption is necessary (see Theorem \ref{main5}). For $\epsilon>0$, define
\begin{equation}\label{02new}
D^\epsilon:=\{\mathbf{z}=\displaystyle\sum_{j=1}^{t}\lambda_j (\mathbf{x}_j-\epsilon \mathbf{e}-\mathbf{x}_k),~\lambda_j\geq
0;~j=1,2,\ldots,t\},
\end{equation}
where $\mathbf{e}$ is a $p-$vector whose all components are equal to one. In fact, $D^\epsilon$ is a convex cone generated by $(\mathbf{x}_j-\epsilon \mathbf{e}-\mathbf{x}_k),~j=1,2,\ldots,t$, instead of $(\mathbf{x}_j-\mathbf{x}_k),~j=1,2,\ldots,t$. Set $$E^\epsilon:=D^\epsilon+\mathbb{R}^p_+.$$
Evidently, we have $E^0=E.$ See Figure 3 to have a better insight about two sets $D^\epsilon$ and $E^\epsilon.$ As can be seen from Theorem \ref{main1}, the value function defined in terms of $E$ is zero on the boundary of $x_k+E$. We have defined $E^\epsilon$ such that $\mathbf{x}_j-\mathbf{x}_k\in int\,E^\epsilon,~j=1,2,\ldots,t,$ for each $\epsilon>0$. This leads to $\mathbf{x}_j\succ\mathbf{x}_k,~j=1,2,\ldots,t,$ for the value function defined in terms of some pointed $E^{\bar\epsilon}$; see Theorem \ref{main4}.

As $E$ is pointed, $E^\epsilon$ is pointed for some $\epsilon>0$. Theorem \ref{main2} proves it.

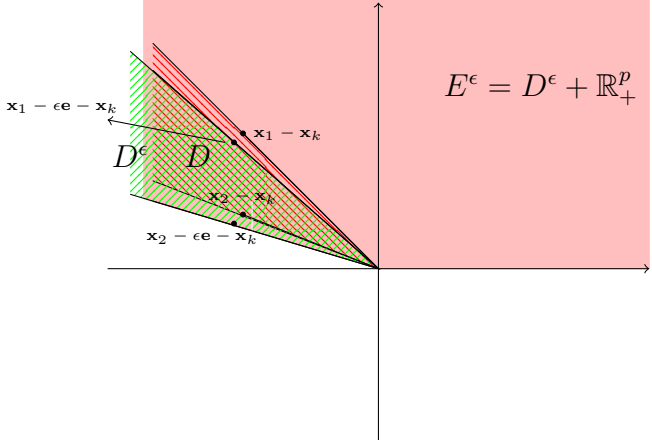
\begin{figure}[H]\label{Fig11}
\begin{tikzpicture}[scale=0.60,domain=0:3]


\coordinate (A) at (-6.2,1.55);
\coordinate (B) at (-6.2,6);
\coordinate (C) at (5,6);
\coordinate (D) at (-1,0);
\coordinate (E) at (5,0);


\draw[fill=red,pink]  (A)-- coordinate[pos=.6] (AB) (B)--(C)
         --coordinate[pos=.45] (BC) (C)--(E)
         --coordinate[pos=.55] (CE) (E)--(D)
         --coordinate[pos=.55] (ED) (D)--(A)
         --coordinate[pos=.55] (DA) (A);
\coordinate (A) at (-6,1.95);
\coordinate (B) at (-6,5);
\coordinate (C) at (-1,0);


\draw[pattern=north west lines, pattern color=red]  (A)-- coordinate[pos=.06, color=white] (AC) (C)--(B)
         --coordinate[pos=.05, color=white] (CB) (B);

   \coordinate (A) at (-6.5,1.65);
\coordinate (B) at (-6.5,4.82);
\coordinate (C) at (-1,0);


\draw[pattern=north east lines, pattern color=green]  (A)-- coordinate[pos=.06, color=white] (AC) (C)--(B)
         --coordinate[pos=.05, color=white] (CB) (B);

          \draw[white]    plot [domain = 0:7, samples = 300](2.35-\x,-4) ;
       \draw[white]    plot [domain = 0:7, samples = 300](-3.8,1.5-\x) ;


       \draw[black]    plot [domain = 1:4, samples = 300](-\x,\x-1) ;
        \draw[black]    plot [domain = 1:6, samples = 300](-\x,0.88*\x-0.88) ;
       \draw[black]    plot [domain = 1:4, samples = 300](-\x,0.4*\x-0.4) ;
       \draw[black]    plot [domain = 1:6, samples = 300](-\x,0.3*\x-0.3) ;



\draw[color=black] (-3,3) node {\tiny{$\mathbf{x}_1-\mathbf{x}_k$}};
\draw [fill=black] (-4,3) circle (1.5pt);
\draw [fill=black] (-4.2,2.8) circle (1.5pt);
\draw[color=black] (-4,1.6) node {\tiny{$\mathbf{x}_2-\mathbf{x}_k$}};
\draw[color=black] (-4.9,0.7) node {\tiny{$\mathbf{x}_2-\epsilon \mathbf{e}-\mathbf{x}_k$}};
\draw[color=black] (-8,3.6) node {\tiny{$\mathbf{x}_1-\epsilon \mathbf{e}-\mathbf{x}_k$}};
 \draw[->] (-4.4,2.8) -- (-7,3.3) node[right] {};
\draw [fill=black] (-4,1.2) circle (1.5pt);
\draw [fill=black] (-4.2,1) circle (1.5pt);

\draw[color=black] (2.6,4) node {$E^\epsilon=D^\epsilon+\mathbb{R}^p_+$};
\draw[color=black] (-5,2.5) node {$D$};
\draw[color=black] (-6.5,2.5) node {$D^\epsilon$};
    \draw[->] (-7,0) -- (5,0) node[right] {};
    \draw[->] (-1,-3.8) -- (-1,5.9) node[above] {};
\draw[color=black] (0,-4.9) node {};

\end{tikzpicture}
\caption{Illustration of two cones $D^\epsilon$ and $E^\epsilon$.~~~~~~~~~~~~~~~~~~~~~~~~~~~~~~~~}
\end{figure}

\begin{thm}\label{main2}
Assume that $E$ is pointed. Then $E^\epsilon$ is pointed for some $\epsilon>0$.
\end{thm}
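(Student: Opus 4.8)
The plan is to reduce pointedness of these finitely generated cones to the existence of a single linear functional that is strictly positive on all of their generators, and then to exploit the fact that the generators of $E^\epsilon$ depend affinely (hence continuously) on $\epsilon$. First I would record that both cones are finitely generated: $E$ is generated by $\mathbf{x}_1-\mathbf{x}_k,\ldots,\mathbf{x}_t-\mathbf{x}_k$ together with the standard unit vectors $\mathbf{u}_1,\ldots,\mathbf{u}_p$ (which generate $\mathbb{R}^p_+$), and $E^\epsilon$ is generated by $\mathbf{x}_1-\epsilon\mathbf{e}-\mathbf{x}_k,\ldots,\mathbf{x}_t-\epsilon\mathbf{e}-\mathbf{x}_k,\mathbf{u}_1,\ldots,\mathbf{u}_p$. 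Being polyhedral, they are closed. The elementary lemma I would isolate is: if a convex cone is generated by vectors $\mathbf{g}_1,\ldots,\mathbf{g}_s$ and there is an $\mathbf{a}$ with $\langle\mathbf{a},\mathbf{g}_l\rangle>0$ for every $l$, then the cone is pointed. Its one-line proof writes any $\mathbf{v}$ in the lineality space as $\sum_l\mu_l\mathbf{g}_l=-\sum_l\nu_l\mathbf{g}_l$ with $\mu_l,\nu_l\geq 0$, observes that $\langle\mathbf{a},\mathbf{v}\rangle$ is simultaneously $\geq 0$ and $\leq 0$ hence $0$, and then forces every nonnegative term $\mu_l\langle\mathbf{a},\mathbf{g}_l\rangle$ to vanish, so $\mathbf{v}=\mathbf{0}$.

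Next I would use the hypothesis that $E$ is pointed to manufacture such a functional for $E$ itself. Since $E$ is a closed pointed convex cone, its dual cone $E^{*}=\{\mathbf{a}:\langle\mathbf{a},\mathbf{c}\rangle\geq 0\ \forall\,\mathbf{c}\in E\}$ has nonempty interior, and any $\mathbf{a}\in int\,E^{*}$ satisfies $\langle\mathbf{a},\mathbf{c}\rangle>0$ for all $\mathbf{c}\in E\setminus\{\mathbf{0}\}$. Applying this to the generators yields $\langle\mathbf{a},\mathbf{x}_j-\mathbf{x}_k\rangle>0$ for $j=1,\ldots,t$ and $\langle\mathbf{a},\mathbf{u}_i\rangle=a_i>0$ for each $i$, whence $\langle\mathbf{a},\mathbf{e}\rangle=\sum_{i=1}^{p}a_i>0$.

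The final and essentially only computational step is to verify that this same $\mathbf{a}$ stays strictly positive on the generators of $E^\epsilon$ once $\epsilon$ is small. For the orthant generators nothing changes, since $\langle\mathbf{a},\mathbf{u}_i\rangle=a_i>0$ independently of $\epsilon$. For the perturbed rays, $\langle\mathbf{a},\mathbf{x}_j-\epsilon\mathbf{e}-\mathbf{x}_k\rangle=\langle\mathbf{a},\mathbf{x}_j-\mathbf{x}_k\rangle-\epsilon\langle\mathbf{a},\mathbf{e}\rangle$, which remains positive as long as $\epsilon<\big(\min_{1\leq j\leq t}\langle\mathbf{a},\mathbf{x}_j-\mathbf{x}_k\rangle\big)/\langle\mathbf{a},\mathbf{e}\rangle$, a strictly positive threshold. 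Fixing any such $\bar\epsilon>0$, the lemma of the first paragraph applies to $E^{\bar\epsilon}$ and shows $E^{\bar\epsilon}$ is pointed, which is what is claimed.

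The only nontrivial ingredient, and hence the step I expect to be the main obstacle, is the passage from pointedness to a strictly separating functional; this rests on the closedness of $E$ (guaranteed here because $E$ is polyhedral) together with the standard conic duality $int\,E^{*}\neq\emptyset\ \Leftrightarrow\ E$ pointed. One could instead attempt a compactness argument — assuming a unit vector $\mathbf{v}_n\in E^{\epsilon_n}\cap(-E^{\epsilon_n})$ for some $\epsilon_n\downarrow 0$, passing to a convergent subsequence, and arguing the limit lies in $(E\cap(-E))\setminus\{\mathbf{0}\}$ — but that route requires establishing an outer-limit (upper semicontinuity) property of $\epsilon\mapsto E^\epsilon$ at $0$, which is more delicate than the functional computation above and is precisely why I would favour the dual-vector approach.
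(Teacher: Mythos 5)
Your proof is correct, but it takes a genuinely different route from the paper's. The paper argues in the primal by contradiction: assuming $E^\epsilon$ fails to be pointed for every $\epsilon>0$, it picks unit vectors $\mathbf{y}_\epsilon$ with $\pm\mathbf{y}_\epsilon\in E^\epsilon$, extracts a convergent subsequence as $\epsilon\downarrow 0$, and asserts that the limit $\bar{\mathbf{y}}$ satisfies $\pm\bar{\mathbf{y}}\in E$, contradicting pointedness of $E$. That last assertion is exactly the outer-limit (upper semicontinuity) property of $\epsilon\mapsto E^\epsilon$ at $0$ that you flagged as delicate and deliberately avoided; the paper dispatches it with ``it can be seen that,'' whereas it is the real content of that argument (the conic coefficients representing $\mathbf{y}_\epsilon$ need not stay bounded, so some justification is required). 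Your dual approach sidesteps this entirely: you take $\mathbf{a}\in int\,E^{+}$, which exists by the same duality fact the paper itself invokes later in the proof of Theorem~\ref{main3} via \cite{ber}, observe that $\mathbf{a}$ is strictly positive on the finitely many generators of $E$, and then use the affine dependence of the generators of $E^\epsilon$ on $\epsilon$ to get an explicit threshold $\bar\epsilon=\bigl(\min_j\langle\mathbf{a},\mathbf{x}_j-\mathbf{x}_k\rangle\bigr)/\langle\mathbf{a},\mathbf{e}\rangle$ below which the same functional certifies pointedness of $E^\epsilon$. What your route buys is a constructive, quantitative bound on $\bar\epsilon$ and a cleaner logical structure that reuses the dual characterization already needed for the LP test in Theorem~\ref{main3}; what the paper's route buys is brevity and independence from the duality machinery, at the cost of a compactness step whose key limit claim is left unjustified.
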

\begin{proof}
By indirect proof, assume that $E^\epsilon$ is not pointed for any $\epsilon>0$. So, there exists a sequence $\{\mathbf{y}_\epsilon\}_\epsilon\subseteq \mathbb{R}^p\setminus\{0\}$ such that $\mathbf{y}_\epsilon,-\mathbf{y}_\epsilon\in E^\epsilon$ for each $\epsilon>0$. As $E^\epsilon$ is a cone, without loss of generality one may assume $\|\mathbf{y}_\epsilon\|=1$ for each $\epsilon>0$. Hence, without loss of generality (by working with an appropriate subsequence if necessary), one may assume $\mathbf{y}_\epsilon$ converges to some nonzero $\bar{\mathbf{y}}$ as $\epsilon\downarrow 0.$ It can be seen that $\bar{\mathbf{y}},-\bar{\mathbf{y}}\in E$. This contradicts the pointedness of $E$.
\end{proof}

According to Theorem \ref{main2}, $E^{\bar\epsilon}$ is a closed pointed convex cone for some $\bar\epsilon>0.$ As $0<\epsilon_1<\epsilon_2$ implies $E\subseteq E^{\epsilon_1}\subseteq E^{\epsilon_2}$, if $E^{\bar\epsilon}$ is pointed, then $E^\epsilon$ is pointed for each $\epsilon\in[0,\bar\epsilon].$ According to this fact, we sketch Procedure 1 to derive an $\bar\epsilon>0$ for which $E^{\bar\epsilon}$ is pointed. This procedure, which follows a backtracking manner, works based upon the optimal value of the following Linear Programming (LP) problem:
\begin{equation}\label{LP-pointed}
\begin{array}{lll}
z^*_\epsilon:=&\min & \displaystyle\sum_{j=1}^{t+p} r_j\\
                        & s.t.  & (\mathbf{x}_j-\epsilon \mathbf{e}-\mathbf{x}_k)^T\mathbf{d}-s_j+r_j=1,~~j=1,2,\ldots,t,\vspace{2mm}\\
                        &        &d_i-s_{t+i}+r_{t+i}=1,~~i=1,2,\ldots,p,\vspace{2mm}\\
                        &        &s_j,r_j\geq 0,~~j=1,2,\ldots,t+p,\vspace{2mm}\\
                        &&\mathbf{d}\geq 0.
\end{array}
\end{equation}
In LP (\ref{LP-pointed}), $(\mathbf{d},s_1,s_2,\ldots,s_{t+p},r_1,r_2,\ldots,r_{t+p})\in\mathbb{R}^p\times \mathbb{R}^{t+p}\times \mathbb{R}^{t+p}$ is the variable vector. The superscript $\lq\lq ^T"$ stands for transpose.  The quantity $z^*_\epsilon$ denotes the optimal value of the objective function of LP (\ref{LP-pointed}).  This problem is always feasible (set $\mathbf{d}=\mathbf{0},~s_j=0,~r_j=1,~j=1,2,\ldots,t+p$). Furthermore, it always generates optimal solution with $z^*_\epsilon\geq 0.$ Theorem \ref{main3} shows how LP (\ref{LP-pointed}) can be used for checking the pointedness of $E^\epsilon.$\footnote{Checking the pointedness of finitely-generated convex cones may be found in the literature. We proved Theorem \ref{main3} to keep the paper self-contained.}

\begin{thm}\label{main3}
Given $\epsilon>0$, consider LP (\ref{LP-pointed}). The convex cone $E^\epsilon$ is pointed if and only if $z^*_\epsilon=0$.
\end{thm}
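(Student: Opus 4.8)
The plan is to recognize $E^\epsilon$ as the finitely generated convex cone whose $t+p$ generators are the vectors $\mathbf{a}_j:=\mathbf{x}_j-\epsilon\mathbf{e}-\mathbf{x}_k$ ($j=1,\dots,t$) together with the $p$ unit coordinate vectors generating $\mathbb{R}^p_+$, which I collect as $\mathbf{g}_1,\dots,\mathbf{g}_{t+p}$, and then to turn the optimal value of (\ref{LP-pointed}) into a feasibility statement about these generators. Since each $r_j\geq 0$ we always have $z^*_\epsilon\geq 0$, so $z^*_\epsilon=0$ holds exactly when (\ref{LP-pointed}) admits a feasible point with $r_1=\cdots=r_{t+p}=0$. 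Setting all $r_j=0$ and eliminating the slacks $s_j\geq 0$, I would observe that this is equivalent to solvability of
\[
\mathbf{a}_j^{T}\mathbf{d}\geq 1\ (j=1,\dots,t),\qquad d_i\geq 1\ (i=1,\dots,p),\qquad \mathbf{d}\geq \mathbf{0},
\]
i.e.\ to the existence of a $\mathbf{d}$ with $\mathbf{g}_\ell^{T}\mathbf{d}\geq 1$ for every generator $\mathbf{g}_\ell$ (the constraint $\mathbf{d}\geq\mathbf{0}$ being automatic from $d_i\geq 1$). Thus the theorem reduces to: $E^\epsilon$ is pointed if and only if some $\mathbf{d}$ is strictly positive on all generators.

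For the implication $z^*_\epsilon=0\Rightarrow$ pointedness I would argue directly. Take such a $\mathbf{d}$ and let $\mathbf{y}\in E^\epsilon\cap(-E^\epsilon)$, writing $\mathbf{y}=\sum_\ell\lambda_\ell\mathbf{g}_\ell$ and $-\mathbf{y}=\sum_\ell\mu_\ell\mathbf{g}_\ell$ with $\lambda_\ell,\mu_\ell\geq 0$. Pairing with $\mathbf{d}$ gives $\mathbf{d}^{T}\mathbf{y}\geq 0$ and $-\mathbf{d}^{T}\mathbf{y}\geq 0$, hence $\mathbf{d}^{T}\mathbf{y}=0$; since $\sum_\ell\lambda_\ell(\mathbf{g}_\ell^{T}\mathbf{d})=0$ with every $\mathbf{g}_\ell^{T}\mathbf{d}\geq 1>0$, all $\lambda_\ell=0$ and so $\mathbf{y}=\mathbf{0}$. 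This yields $E^\epsilon\cap(-E^\epsilon)=\{\mathbf{0}\}$.

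The converse is the crux, and it is where the main work lies. Assuming $E^\epsilon$ pointed (and that no generator is the zero vector, which holds whenever $\mathbf{x}_j-\mathbf{x}_k\neq\epsilon\mathbf{e}$ for all $j$), I would first show $\mathbf{0}\notin\mathrm{conv}\{\mathbf{g}_1,\dots,\mathbf{g}_{t+p}\}$: any relation $\mathbf{0}=\sum_\ell\lambda_\ell\mathbf{g}_\ell$ with $\lambda_\ell\geq 0$ and $\sum_\ell\lambda_\ell=1$ would, on isolating an index $\ell_0$ with $\lambda_{\ell_0}>0$, place the nonzero $\mathbf{g}_{\ell_0}$ in $E^\epsilon\cap(-E^\epsilon)$, contradicting pointedness. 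Since $\mathrm{conv}\{\mathbf{g}_1,\dots,\mathbf{g}_{t+p}\}$ is compact and convex, I would strictly separate it from $\mathbf{0}$ to obtain $\mathbf{d}_0$ with $\mathbf{g}_\ell^{T}\mathbf{d}_0>0$ for all $\ell$; evaluating on the unit-vector generators shows $\mathbf{d}_0>\mathbf{0}$, and rescaling by $c:=\min_\ell \mathbf{g}_\ell^{T}\mathbf{d}_0>0$ gives $\mathbf{d}:=\mathbf{d}_0/c\geq\mathbf{0}$ with $\mathbf{g}_\ell^{T}\mathbf{d}\geq 1$ for every $\ell$. Together with $r_\ell=0$ this point is feasible for (\ref{LP-pointed}), forcing $z^*_\epsilon=0$. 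The main obstacle is precisely this converse: deriving $\mathbf{0}\notin\mathrm{conv}\{\mathbf{g}_\ell\}$ from pointedness and then invoking strict separation of a point from a compact convex set; the bookkeeping of the LP reduction and the excluded degenerate case of a vanishing generator are the remaining points to handle with care.
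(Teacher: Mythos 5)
Your proposal is correct, and it arrives at the same pivot point as the paper: both arguments reduce $z^*_\epsilon=0$ to the existence of a vector $\mathbf{d}$ with $\mathbf{g}_\ell^T\mathbf{d}\geq 1$ for every generator $\mathbf{g}_\ell$ of $E^\epsilon$ (the $\mathbf{x}_j-\epsilon\mathbf{e}-\mathbf{x}_k$ together with the unit vectors), and then identify pointedness with the existence of such a strictly positive functional. Where you diverge is in how that identification is justified. The paper simply cites a known duality result (Berman, Theorem 2.3): a closed convex cone $K$ is pointed if and only if $int\,K^+\neq\emptyset$, after which the rest is the computation of $[E^\epsilon]^+$ and the normalization you also perform. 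You instead prove both directions from first principles: the easy direction by pairing a strictly positive functional against a putative $\mathbf{y}\in E^\epsilon\cap(-E^\epsilon)$, and the hard direction by showing $\mathbf{0}\notin\mathrm{conv}\{\mathbf{g}_1,\ldots,\mathbf{g}_{t+p}\}$ and strictly separating. Your route is self-contained (modulo the separation theorem) and makes the mechanism visible, at the cost of more work; the paper's is shorter but leans on an external reference. One genuine bonus of your treatment: you explicitly flag the degenerate case of a vanishing generator $\mathbf{x}_j-\epsilon\mathbf{e}-\mathbf{x}_k=\mathbf{0}$. This is not pedantry --- in that case the $j$-th LP constraint forces $r_j\geq 1$, so $z^*_\epsilon>0$ even though $E^\epsilon$ (to which a zero generator contributes nothing) may well be pointed, and the paper's formula for $int\,[E^\epsilon]^+$ also silently breaks down there. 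So the theorem as literally stated needs that exclusion, which the paper omits and you correctly isolate.
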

\begin{proof}
It is known from the literature that $E^\epsilon$ is pointed if and only if $int\,[E^\epsilon]^+\neq \emptyset$, where $[E^\epsilon]^+$ is the non-negative dual of $E^\epsilon$ defined as
$$[E^\epsilon]^+:=\{\mathbf{d}\in\mathbb{R}^p:~\mathbf{d}^T\mathbf{z}\geq 0,~\forall \mathbf{z}\in E^\epsilon\}.$$
See e.g. \cite[Theorem 2.3]{ber}. We have
$$[E^\epsilon]^+:=\{\mathbf{d}\in\mathbb{R}^p:~~\mathbf{d}\geq \mathbf{0},~~(\mathbf{x}_j-\epsilon \mathbf{e}-\mathbf{x}_k)^T\mathbf{d}\geq 0,~~j=1,2,\ldots,t\}$$
and so
$$int\,[E^\epsilon]^+:=\{\mathbf{d}\in\mathbb{R}^p:~~\mathbf{d}>0,~~ (\mathbf{x}_j-\epsilon \mathbf{e}-\mathbf{x}_k)^T\mathbf{d}>0,~~j=1,2,\ldots,t\}.$$
Therefore, $E^\epsilon$ is pointed if and only if
\begin{equation}\label{290}
\exists  \mathbf{d}\in\mathbb{R}^p~~s.t.~~\mathbf{d}>0,~~(\mathbf{x}_j-\epsilon \mathbf{e}-\mathbf{x}_k)^T \mathbf{d}>0,~~j=1,2,\ldots,t.
\end{equation}
 By normalizing, (\ref{290}) is equivalent to
\begin{equation}\label{291}
\exists \bar{\mathbf{d}}\in\mathbb{R}^p~~s.t.~~\bar{\mathbf{d}}\geq \mathbf{e},~~(\mathbf{x}_j-\epsilon \mathbf{e}-\mathbf{x}_k)^T\bar{\mathbf{d}}\geq 1,~~j=1,2,\ldots,t.
\end{equation}
Furthermore, (\ref{291}) holds if and only if there exists some $\bar{\mathbf{d}}\in\mathbb{R}^p$ such that the vector $$(\mathbf{d}=\bar{\mathbf{d}}, s_j=(\mathbf{x}_j-\epsilon \mathbf{e}-\mathbf{x}_k)^T\bar{\mathbf{d}}-1;~j=1,\ldots,t,~s_j=d_j-1;~j=t+1,\ldots,t+p,~r=0)$$ is feasible for LP (\ref{LP-pointed}) if and only if $z^*_\epsilon=0$. Therefore, $E^\epsilon$ is pointed if and only if $z^*_\epsilon=0$.
\end{proof}

Now, we are ready to present Procedure 1 to derive an $\bar\epsilon>0$ for which $E^{\bar\epsilon}$ is pointed. This backtracking procedure works taking two input parameters $\epsilon_0>0$ (sufficiently small), as an initial value of $\epsilon$, and $\beta\in (0,1)$, as a parameter for contracting the value of $\epsilon$.

$$\begin{tabular}{|l|}\hline
\textbf{Procedure 1:}\\\hline
Inputs.~  $\epsilon_0>0$ (sufficiently small), $\beta\in (0,1)$.\\
Step 1.~  Set $i=0.$\\
Step 2.~  Set $\epsilon=\beta^i\epsilon_0$ and Solve LP (\ref{LP-pointed}).\\
Step 3.~  If $z^*_\epsilon=0$, then stop ($E^\epsilon$ is pointed);\\
~~~~~~~~~~   else\\
~~~~~~~~~~~~~~~set $i=i+1$ and go to Step 2.\\\hline
\end{tabular} $$
$~$\\

Throughout the iterations of Procedure 1, the $\epsilon$ value decreases by multiplying it by
$\beta\in (0,1)$. Recall that if $E^{\bar\epsilon}$ is pointed, then $E^\epsilon$ is pointed for each $\epsilon\in[0,\bar\epsilon].$ Procedure 1 terminates according to this fact and Theorem \ref{main2}.

Now, assume that $\bar\epsilon>0$ derived from Procedure 1 is given. The convex cone $E^{\bar\epsilon}$ is pointed. Similar to $\psi(\cdot)$ defined in (\ref{valuefunc}), we construct another function, $\vartheta:\mathbb{R}^p\rightarrow \mathbb{R}$ as follows:
\begin{equation}\label{valuefunc2}
\vartheta(\mathbf{x}):=dist(\mathbf{x}-\mathbf{x}_k; \mathbb R^p\setminus E^{\bar\epsilon})-dist(\mathbf{x}-\mathbf{x}_k; E^{\bar\epsilon}).
\end{equation}
As you can see from Theorem \ref{main4} below, $\vartheta$ is a continuous increasing quasi-concave value function satisfying strict inequalities $\vartheta(\mathbf{x}_j)>\vartheta(\mathbf{x}_k)$ for all $j=1,2,\ldots,t.$ The proof of this result is similar to that of Theorem \ref{main1} and Corollary \ref{corNew1}, and is hence omitted. The only point which should be taken into account is that here for each $j=1,2,\ldots,t$, we have $\mathbf{x}_j\in int(\mathbf{x}_k+E^{\bar\epsilon})$, and hence $\vartheta(\mathbf{x}_j)>0$. On the other hand, pointedness of $E^{\bar\epsilon}$ implies $\mathbf{x}_k\in bd\,(\mathbf{x}_k+E^{\bar\epsilon})$, and so, $\vartheta(\mathbf{x}_k)=0.$ Therefore,
\begin{equation}\label{3we}
\vartheta(\mathbf{x}_j)>\vartheta(\mathbf{x}_k),~~\forall j=1,2,\ldots,t.
\end{equation}

\begin{thm}\label{main4} Let $\bar\epsilon>0$ be given such that $E^{\bar\epsilon}$ is pointed. Consider the function $\vartheta$ defined in (\ref{valuefunc2}). Then:
\begin{enumerate}
\item[(i)] $\vartheta$ is Lipschitz, and so continuous.
\item[(ii)] For each $\mathbf{x}$,
$$\vartheta(\mathbf{x})\left\{\begin{array}{ll}
>0, & \textmd{ if } \mathbf{x}\in int\,(\mathbf{x}_k+E^{\bar\epsilon})\\
=0, & \textmd{ if } \mathbf{x}\in bd\,(\mathbf{x}_k+E^{\bar\epsilon})\\
<0, & \textmd{ if } \mathbf{x}\in int\,\Big(\mathbb{R}^p\setminus (\mathbf{x}_k+E^{\bar\epsilon})\Big)\\
>0, & \textmd{ if } \mathbf{x}\in \mathbf{x}_k+E,~\mathbf{x}\neq \mathbf{x}_k
\end{array}\right.$$  
\item[(iii)]  $\vartheta$ is concave, and so quasi-concave.
\item[(iv)] $\vartheta$ is increasing  w.r.t. $E^{\bar\epsilon}$.
\item[(v)] $\vartheta$ is increasing in the sense of Definition \ref{def}(ii).
\item[(vi)] $\vartheta(\mathbf{x}_j)>\vartheta(\mathbf{x}_k),~~\forall j=1,2,\ldots,t.$
\end{enumerate}
\end{thm}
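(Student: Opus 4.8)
The plan is to mirror the proofs of Theorem \ref{main1} and Corollary \ref{corNew1} almost verbatim, replacing the cone $E$ throughout by the pointed cone $E^{\bar\epsilon}$, and then to isolate the single genuinely new ingredient — the strict positivity recorded in the fourth line of (ii) — which is exactly what upgrades the weak inequalities (\ref{1we}) to the strict ones (\ref{3we}).

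First, parts (i), (iii) and (iv), together with the first three alternatives of (ii), transfer with no change. For (i) I would again use that the distance to a fixed set is Lipschitz of modulus $1$ (see \cite{bec}), so the difference of two such distances is Lipschitz of modulus $2$, and continuity is then immediate. For (iii) I would set $\varphi(\mathbf{y}):=dist(\mathbf{y};\mathbb R^p\setminus E^{\bar\epsilon})-dist(\mathbf{y};E^{\bar\epsilon})$, invoke \cite[Proposition 3.2]{zaf} for concavity of $\varphi$, and note $\vartheta(\mathbf{x})=\varphi(\mathbf{x}-\mathbf{x}_k)$, so a translation preserves concavity; quasi-concavity follows. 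Part (iv) is the same \cite[Proposition 3.2]{zaf} monotonicity statement applied to $E^{\bar\epsilon}$, and the first three cases of (ii) are decided exactly as in Theorem \ref{main1}(ii) by testing whether $\mathbf{x}-\mathbf{x}_k$ lies in $int\,E^{\bar\epsilon}$, on $bd\,E^{\bar\epsilon}$, or in the exterior. Part (v) then drops out of (iv): since $\mathbb R^p_+\subseteq E^{\bar\epsilon}$, any $\mathbf{x}_1\le\mathbf{x}_2$ gives $\mathbf{x}_2-\mathbf{x}_1\in\mathbb R^p_+\subseteq E^{\bar\epsilon}$, whence $\vartheta(\mathbf{x}_1)\le\vartheta(\mathbf{x}_2)$, which is monotonicity in the sense of Definition \ref{def}(ii).

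The crux is the fourth alternative of (ii) and, through it, part (vi); this is where pointedness and the perturbation enter. The observation I would prove is that the perturbation pushes each generating direction strictly inside $E^{\bar\epsilon}$: the identity $\mathbf{x}_j-\mathbf{x}_k=(\mathbf{x}_j-\bar\epsilon\mathbf{e}-\mathbf{x}_k)+\bar\epsilon\mathbf{e}$ writes $\mathbf{x}_j-\mathbf{x}_k$ as a generator of $D^{\bar\epsilon}$ plus the strictly positive vector $\bar\epsilon\mathbf{e}\in\mathbb R^p_{++}$. Because $\mathbb R^p_{++}$ is open and contained in the convex cone $E^{\bar\epsilon}$, one has $E^{\bar\epsilon}+\mathbb R^p_{++}\subseteq int\,E^{\bar\epsilon}$, so $\mathbf{x}_j-\mathbf{x}_k\in int\,E^{\bar\epsilon}$; running the same decomposition on any nonzero element of $D$ (whose coefficients have positive sum) places it in $int\,E^{\bar\epsilon}$ as well, which together with the first line of (ii) delivers the strict positivity asserted in the fourth line. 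In particular $\mathbf{x}_j\in int\,(\mathbf{x}_k+E^{\bar\epsilon})$, so $\vartheta(\mathbf{x}_j)>0$ for every $j$. For (vi) it remains to pin down $\vartheta(\mathbf{x}_k)$: since $E^{\bar\epsilon}$ is pointed it cannot contain $\mathbf{0}$ in its interior (otherwise $E^{\bar\epsilon}=\mathbb R^p$, contradicting pointedness and the nonemptiness of $\mathbb R^p\setminus E^{\bar\epsilon}$), so $\mathbf{0}\in bd\,E^{\bar\epsilon}$, i.e. $\mathbf{x}_k\in bd\,(\mathbf{x}_k+E^{\bar\epsilon})$, and the second line of (ii) gives $\vartheta(\mathbf{x}_k)=0$. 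Combining the two yields $\vartheta(\mathbf{x}_j)>\vartheta(\mathbf{x}_k)$ for all $j$.

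I expect the interior-membership step to be the main obstacle, since everything else is a routine relabelling of the earlier proofs. Two points deserve care: first, the inclusion $E^{\bar\epsilon}+\mathbb R^p_{++}\subseteq int\,E^{\bar\epsilon}$ must be justified from the conical, convex structure (a point of the cone plus an open translate stays inside the cone and is open, hence sits in the interior); second, the strictness in (vi) hinges entirely on pointedness, which is precisely what forces $\mathbf{x}_k$ onto the boundary rather than into the interior — the feature that $\psi$ lacked and that Procedure~1 secures via Theorem \ref{main2} and Theorem \ref{main3}.
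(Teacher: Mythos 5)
Your proposal follows the same route as the paper, which in fact omits the proof of Theorem \ref{main4}, stating only that it parallels Theorem \ref{main1} and Corollary \ref{corNew1} and isolating exactly the two new facts you isolate: $\mathbf{x}_j\in int\,(\mathbf{x}_k+E^{\bar\epsilon})$ for each $j$, and $\mathbf{x}_k\in bd\,(\mathbf{x}_k+E^{\bar\epsilon})$ via pointedness. The details you supply are correct where the paper is silent: the decomposition $\mathbf{x}_j-\mathbf{x}_k=(\mathbf{x}_j-\bar\epsilon\mathbf{e}-\mathbf{x}_k)+\bar\epsilon\mathbf{e}$ together with the (valid) inclusion $E^{\bar\epsilon}+\mathbb{R}^p_{++}\subseteq int\,E^{\bar\epsilon}$ gives the interior membership, and your pointedness argument for $\mathbf{0}\in bd\,E^{\bar\epsilon}$ is the same one used for $E$ in Corollary \ref{corNew1}. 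Parts (i), (iii), (iv), (v) and the first three alternatives of (ii) do transfer verbatim, as you say.

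The one place where your argument does not deliver what is claimed is the fourth alternative of (ii). You prove strict positivity for nonzero points of $\mathbf{x}_k+D$: every nonzero $\mathbf{z}=\sum_j\lambda_j(\mathbf{x}_j-\mathbf{x}_k)$ has $\sum_j\lambda_j>0$, hence decomposes as an element of $D^{\bar\epsilon}$ plus $\big(\sum_j\lambda_j\big)\bar\epsilon\mathbf{e}\in\mathbb{R}^p_{++}$. But the theorem asserts positivity for all nonzero points of $\mathbf{x}_k+E$ with $E=D+\mathbb{R}^p_+$, and for a point whose only decompositions have zero $D$-component the $\bar\epsilon\mathbf{e}$ term never appears. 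Indeed the assertion is false as literally stated: take $p=2$, $t=1$, $\mathbf{x}_1-\mathbf{x}_k=(0,1)^T$; then $(1,0)^T$ is a nonzero element of $E=\mathbb{R}^2_+$, yet every element of $E^{\bar\epsilon}$ has nonnegative second coordinate, so $(1,0)^T\in bd\,E^{\bar\epsilon}$ and $\vartheta\big(\mathbf{x}_k+(1,0)^T\big)=0$, not $>0$ (and $E^{\bar\epsilon}$ is pointed here). This is a defect of the theorem's statement rather than of your strategy --- the fourth line should be restricted to $\mathbf{x}\in\mathbf{x}_k+D$, $\mathbf{x}\neq\mathbf{x}_k$, or to such points translated by $\mathbb{R}^p_+$ --- and it does not affect part (vi), since each $\mathbf{x}_j-\mathbf{x}_k$ is itself a generator of $D$. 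You should either restrict the claim accordingly or acknowledge explicitly that your decomposition only reaches the points of $E$ that sit above a nonzero element of $D$.
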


Theorem \ref{main4} shows that the function defined in (\ref{valuefunc2}) is a desirable value function that we are looking for. Notice that we proved the existence of such a function only assuming the pointedness of the cone $E$. The interesting point is that, in the following result we show that pointedness of $E$ is necessary for the existence of the value function satisfying the desirable properties.

\begin{thm}\label{main5}
If there exists an increasing quasi-concave value function $\vartheta$ satisfying $\vartheta(\mathbf{x}_j)>\vartheta(\mathbf{x}_k)$ for all $j=1,2,\ldots,t$, then  $E$ is pointed.
\end{thm}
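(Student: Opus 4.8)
The plan is to argue by contraposition: I will assume that $E$ is \emph{not} pointed and use this to manufacture a convex combination of the $\mathbf{x}_j$ that is dominated (componentwise) by $\mathbf{x}_k$. Playing the increasing property of $\vartheta$ against its quasi-concavity then contradicts the strict inequalities $\vartheta(\mathbf{x}_j)>\vartheta(\mathbf{x}_k)$, forcing $E$ to be pointed after all.

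Concretely, suppose $E$ is not pointed, so there is $\mathbf{z}\neq\mathbf{0}$ with $\mathbf{z}\in E$ and $-\mathbf{z}\in E$. Unfolding both memberships through the definition $E=D+\mathbb{R}^p_+$ gives $\mathbf{z}=\sum_{j=1}^{t}\lambda_j(\mathbf{x}_j-\mathbf{x}_k)+\mathbf{u}$ and $-\mathbf{z}=\sum_{j=1}^{t}\mu_j(\mathbf{x}_j-\mathbf{x}_k)+\mathbf{v}$ with $\lambda_j,\mu_j\geq 0$ and $\mathbf{u},\mathbf{v}\in\mathbb{R}^p_+$. Adding the two relations and writing $\gamma_j:=\lambda_j+\mu_j\geq 0$ yields $\sum_{j=1}^{t}\gamma_j(\mathbf{x}_j-\mathbf{x}_k)=-(\mathbf{u}+\mathbf{v})\leq\mathbf{0}$. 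The first thing I must verify is that $\sum_{j}\gamma_j>0$: if every $\gamma_j$ vanished, then $\mathbf{z}=\mathbf{u}\geq\mathbf{0}$ and $-\mathbf{z}=\mathbf{v}\geq\mathbf{0}$ would force $\mathbf{z}=\mathbf{0}$, contradicting $\mathbf{z}\neq\mathbf{0}$. Hence I may set $s:=\sum_{j}\gamma_j>0$ and $\alpha_j:=\gamma_j/s$, so that $\alpha_j\geq 0$, $\sum_j\alpha_j=1$, and dividing the displayed inequality by $s$ gives $\sum_{j=1}^{t}\alpha_j\mathbf{x}_j\leq\mathbf{x}_k$. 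Writing $\mathbf{w}:=\sum_{j=1}^{t}\alpha_j\mathbf{x}_j$, this reads $\mathbf{w}\leq\mathbf{x}_k$.

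Now I extract the contradiction. Since $\vartheta$ is increasing and $\mathbf{w}\leq\mathbf{x}_k$, Definition \ref{def}(ii) gives $\vartheta(\mathbf{w})\leq\vartheta(\mathbf{x}_k)$. On the other hand, restricting to the nonempty index set $J:=\{j:\alpha_j>0\}$ and applying Jensen's inequality for quasi-concave functions (stated in Section 3) to $\mathbf{w}=\sum_{j\in J}\alpha_j\mathbf{x}_j$ gives $\vartheta(\mathbf{w})\geq\min_{j\in J}\vartheta(\mathbf{x}_j)$. Because $J\subseteq\{1,\dots,t\}$ is nonempty and each hypothesis $\vartheta(\mathbf{x}_j)>\vartheta(\mathbf{x}_k)$ holds, the minimum of this finite collection is itself strictly greater than $\vartheta(\mathbf{x}_k)$, so $\vartheta(\mathbf{w})>\vartheta(\mathbf{x}_k)$. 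This clashes with $\vartheta(\mathbf{w})\leq\vartheta(\mathbf{x}_k)$, and the contradiction establishes that $E$ is pointed.

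I expect no serious obstacle here; the one genuinely delicate point is ruling out the degenerate case $\sum_j\gamma_j=0$, which is exactly what guarantees that the convex combination is a \emph{proper} combination of the preferred alternatives rather than a spurious artifact of the orthant summand $\mathbb{R}^p_+$. Once that is secured, the normalization and the head-on collision between monotonicity and quasi-concavity are routine, and the strictness survives because the minimum is taken over a finite nonempty set of values each strictly exceeding $\vartheta(\mathbf{x}_k)$.
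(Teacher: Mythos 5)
Your proof is correct, and it reorganizes the argument in a way that differs from the paper's in a useful respect. The paper also argues by contradiction from non-pointedness and also decomposes $\mathbf{y}$ and $-\mathbf{y}$ through $E=D+\mathbb{R}^p_+$, but it then writes $\mathbf{x}_k$ itself as the convex combination $\theta\bigl(\mathbf{x}_k+\tfrac{1}{\sum_i\lambda_i}\mathbf{y}\bigr)+(1-\theta)\bigl(\mathbf{x}_k-\tfrac{1}{\sum_i\mu_i}\mathbf{y}\bigr)$ with $\theta=\frac{\sum_i\lambda_i}{\sum_i\lambda_i+\sum_i\mu_i}$, applies quasi-concavity to this two-point combination, uses monotonicity to discard the orthant parts $\mathbf{d}_1,\mathbf{d}_2$, and then applies Jensen a second time to each of the resulting convex combinations of the $\mathbf{x}_j$. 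You instead add the two decompositions first, which collapses everything into a single convex combination $\mathbf{w}=\sum_j\alpha_j\mathbf{x}_j\leq\mathbf{x}_k$, and then you need only one application of Jensen and one application of monotonicity. Your route is slightly more economical, and it also quietly avoids a degenerate case that the paper's displayed chain does not handle: the paper only establishes $\sum_j\lambda_j+\sum_j\mu_j>0$, so one of $\sum_i\lambda_i$ or $\sum_i\mu_i$ could individually be zero, in which case the expressions $\tfrac{1}{\sum_i\lambda_i}\mathbf{y}$ or $\tfrac{1}{\sum_i\mu_i}\mathbf{y}$ are undefined (that case can be patched separately, but the paper does not do so). Your summed decomposition needs only the aggregate positivity $s=\sum_j(\lambda_j+\mu_j)>0$, which you do verify, so no case split is required. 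The restriction to the index set $J$ before invoking Jensen is harmless but unnecessary, since the inequality $\vartheta(\mathbf{w})\geq\min_j\vartheta(\mathbf{x}_j)$ over all of $\{1,\dots,t\}$ already suffices.
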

\begin{proof}
By indirect proof, assume that $E$ is not pointed. Then there exists some vector $\mathbf{y}\in\mathbb{R}^p\setminus \{0\}$ such that $\mathbf{y},-\mathbf{y}\in E$. Hence, there exist some $\lambda_j,\mu_j\geq 0,~j=1,2,\ldots,t$ and some $\mathbf{d}_1,\mathbf{d}_2\in\mathbb{R}^p_+$ such that $$\mathbf{y}=\displaystyle\sum_{j=1}^t\lambda_j(\mathbf{x}_j-\mathbf{x}_k)+\mathbf{d}_1,~~~~-\mathbf{y}=\displaystyle\sum_{j=1}^t\mu_j(\mathbf{x}_j-\mathbf{x}_k)+\mathbf{d}_2.$$
If $\lambda_j=\mu_j=0$ for each $j=1,2,\ldots,t$, then $\mathbf{d}_1+\mathbf{d}_2=0$ which implies $\mathbf{d}_1=\mathbf{d}_2=0$, and then $\mathbf{y}=0$. This contradiction shows that $\sum_{j=1}^t\lambda_j+\sum_{j=1}^t\mu_j>0$. Thus, by setting $\theta:=\frac{\sum_{i=1}^t\lambda_i}{\sum_{i=1}^t\lambda_i+\sum_{i=1}^t\mu_i},$ we get $\theta\in [0,1]$ and then
$$\begin{array}{ll}
\vartheta(\mathbf{x}_k)&=\vartheta\bigg(\theta(\mathbf{x}_k+\frac{1}{\sum_{i=1}^t\lambda_i}\mathbf{y})+(1-\theta)(\mathbf{x}_k-\frac{1}{\sum_{i=1}^t\mu_i}\mathbf{y})\bigg)\vspace{2mm}\\
         &\geq \min\bigg\{\vartheta(\mathbf{x}_k+\frac{1}{\sum_{i=1}^t\lambda_i}\mathbf{y}), \vartheta(\mathbf{x}_k-\frac{1}{\sum_{i=1}^t\mu_i}\mathbf{y})\bigg\}~~~\textmd{(by quasi-concavity)}\vspace{2mm}\\
         &=\min\bigg\{\vartheta\bigg(\mathbf{x}_k+\frac{\sum_{j=1}^t\lambda_j(\mathbf{x}_j-\mathbf{x}_k)+\mathbf{d}_1}{\sum_{i=1}^t\lambda_i}\bigg), \vartheta\bigg(\mathbf{x}_k+\frac{\sum_{j=1}^t\mu_j(\mathbf{x}_j-\mathbf{x}_k)+\mathbf{d}_2}{\sum_{i=1}^t\mu_i}\bigg)\bigg\}\vspace{2mm}\\
         &\geq \min\bigg\{\vartheta\bigg(\mathbf{x}_k+\frac{\sum_{j=1}^t\lambda_j(\mathbf{x}_j-\mathbf{x}_k)}{\sum_{i=1}^t\lambda_i}\bigg), \vartheta\bigg(\mathbf{x}_k+\frac{\sum_{j=1}^t\mu_j(\mathbf{x}_j-\mathbf{x}_k)}{\sum_{i=1}^t\mu_i}\bigg)\bigg\}\vspace{-0.5mm}\\
         &\hspace{7cm}\textmd{(by increasing property)}\\
         &=\min\bigg\{\vartheta\bigg(\frac{\sum_{j=1}^t\lambda_j\mathbf{x}_j}{\sum_{i=1}^t\lambda_i}\bigg), \vartheta\bigg(\frac{\sum_{j=1}^t\mu_j\mathbf{x}_j}{\sum_{i=1}^t\mu_i}\bigg)\bigg\}\vspace{2mm}\\
         &\geq \min\{\vartheta(\mathbf{x}_1),\vartheta(\mathbf{x}_2),\ldots,\vartheta(\mathbf{x}_t)\}~~~\textmd{(by quasi-concavity)}\vspace{2mm}\\
         &>\vartheta(\mathbf{x}_k),~~~\textmd{(by assumption of the theorem)}\vspace{-2mm}\\
\end{array}$$
leading to $\vartheta(\mathbf{x}_k)>\vartheta(\mathbf{x}_k)$. This contradiction completes the proof.
\end{proof}

Corollaries \ref{corNew2}-\ref{corNew3} are direct consequences of our discussions so far.

\begin{corollary}\label{corNew2}
There exists an increasing quasi-concave value function $\vartheta$ satisfying $\vartheta(\mathbf{x}_j)>\vartheta(\mathbf{x}_k)$ for all $j=1,2,\ldots,t,$ if and only if $E$ is pointed. This function can be defined via (\ref{valuefunc2}).
\end{corollary}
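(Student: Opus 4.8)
The plan is to establish the biconditional by treating its two implications separately, since each has effectively been prepared by the preceding results. I would first dispose of the necessity direction, then assemble the sufficiency direction out of the constructive machinery already developed.

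For the ``only if'' part, suppose such an increasing quasi-concave value function $\vartheta$ exists with $\vartheta(\mathbf{x}_j)>\vartheta(\mathbf{x}_k)$ for all $j=1,2,\ldots,t$. This is precisely the hypothesis of Theorem \ref{main5}, whose conclusion is that $E$ is pointed. Hence nothing new is required: I would simply invoke Theorem \ref{main5}.

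For the ``if'' part, suppose $E$ is pointed. By Theorem \ref{main2} there is some $\epsilon>0$ for which $E^\epsilon$ is pointed; concretely, Procedure 1 (justified by Theorems \ref{main2} and \ref{main3}) returns such a value $\bar\epsilon>0$. With this $\bar\epsilon$ fixed, I would define $\vartheta$ through (\ref{valuefunc2}) and apply Theorem \ref{main4}: parts (iii) and (v) give that $\vartheta$ is quasi-concave and increasing in the sense of Definition \ref{def}(ii), while part (vi) supplies the strict inequalities $\vartheta(\mathbf{x}_j)>\vartheta(\mathbf{x}_k)$. This exhibits the desired function and, at the same time, confirms the final assertion that it can be realized via (\ref{valuefunc2}).

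The proof thus reduces to citation and assembly, and I do not expect a genuine obstacle. The one point deserving care is that sufficiency is not a tautology: it depends on pointedness of $E$ propagating to the strictly larger perturbed cone $E^{\bar\epsilon}$ (Theorem \ref{main2}), and on the fact that this pointedness is exactly what forces $\mathbf{x}_k\in bd\,(\mathbf{x}_k+E^{\bar\epsilon})$, whence $\vartheta(\mathbf{x}_k)=0$, whereas each $\mathbf{x}_j$ lies in $int\,(\mathbf{x}_k+E^{\bar\epsilon})$ by construction of $D^{\bar\epsilon}$, yielding $\vartheta(\mathbf{x}_j)>0$. Keeping these two geometric facts explicit is what makes the strict separation $\vartheta(\mathbf{x}_j)>\vartheta(\mathbf{x}_k)$ work.
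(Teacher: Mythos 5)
Your proposal is correct and follows exactly the route the paper intends: the ``only if'' direction is Theorem \ref{main5} (the necessity result), and the ``if'' direction assembles Theorem \ref{main2} with the construction of $\vartheta$ via (\ref{valuefunc2}) and Theorem \ref{main4}, including the two geometric facts ($\mathbf{x}_k\in bd\,(\mathbf{x}_k+E^{\bar\epsilon})$ by pointedness, and $\mathbf{x}_j\in int\,(\mathbf{x}_k+E^{\bar\epsilon})$ by construction) that the paper itself highlights. No gaps.
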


\begin{corollary}\label{corNew4}
Consider the following LP problem:
\begin{equation}\label{LP-pointed-1}
\begin{array}{lll}
z^*:=&\min & \displaystyle\sum_{j=1}^{t+p} r_j\\
                        & s.t.  & (\mathbf{x}_j-\mathbf{x}_k)^T\mathbf{d}-s_j+r_j=1,~~j=1,2,\ldots,t,\vspace{2mm}\\
                        &        &d_i-s_{t+i}+r_{t+i}=1,~~i=1,2,\ldots,p,\vspace{2mm}\\
                        &        &s_j,r_j\geq 0,~~j=1,2,\ldots,t+p,\vspace{2mm}\\
                        &&\mathbf{d}\geq 0.\\
\end{array}
\end{equation}
Then: there exists an increasing quasi-concave value function $\vartheta$ satisfying $\vartheta(\mathbf{x}_j)>\vartheta(\mathbf{x}_k)$ for all $j=1,2,\ldots,t,$ if and only if $z^*=0.$ This function can be defined via (\ref{valuefunc2}).
\end{corollary}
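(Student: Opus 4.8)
The plan is to obtain the result as a composition of two equivalences that have essentially already been established, so the work is largely bookkeeping rather than fresh analysis. The first equivalence is Corollary~\ref{corNew2}: an increasing quasi-concave value function $\vartheta$ with $\vartheta(\mathbf{x}_j)>\vartheta(\mathbf{x}_k)$ for all $j$ exists if and only if the cone $E$ is pointed, and moreover such a function can be taken to be the one in (\ref{valuefunc2}). The second equivalence is the LP characterization of pointedness from Theorem~\ref{main3}. Thus it suffices to show that $E$ is pointed if and only if $z^*=0$, where $z^*$ is the optimal value of LP (\ref{LP-pointed-1}).

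First I would observe that LP (\ref{LP-pointed-1}) is nothing but LP (\ref{LP-pointed}) evaluated at $\epsilon=0$: the generators $\mathbf{x}_j-\epsilon\mathbf{e}-\mathbf{x}_k$ reduce to $\mathbf{x}_j-\mathbf{x}_k$, and every remaining constraint is identical. Consequently $z^*=z^*_0$, and since $E^0=E$ by definition (as noted in the text following (\ref{02new})), the claim ``$E$ pointed $\iff z^*=0$'' is precisely the assertion of Theorem~\ref{main3} read at $\epsilon=0$.

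The only point requiring care---and the one I would flag as the main obstacle, although it is a mild one---is that Theorem~\ref{main3} was stated under the hypothesis $\epsilon>0$, whereas here I need the value $\epsilon=0$. I would therefore inspect its proof to confirm that $\epsilon=0$ is admissible. Going through that argument, strict positivity of $\epsilon$ is never used: the dual-cone description $[E^\epsilon]^+$, the computation of its interior, the normalization step yielding (\ref{291}), and the final reduction to feasibility of LP (\ref{LP-pointed}) all remain valid verbatim with $\epsilon$ replaced by $0$. Hence Theorem~\ref{main3} holds at $\epsilon=0$ as well, giving $E$ pointed $\iff z^*=0$.

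Finally I would assemble the pieces: combining this with Corollary~\ref{corNew2} yields that the desired increasing quasi-concave value function exists if and only if $z^*=0$, and the explicit construction via (\ref{valuefunc2}) is furnished directly by Corollary~\ref{corNew2} (equivalently by Theorem~\ref{main4} applied with a pointed $E^{\bar\epsilon}$). This completes the argument, with the bulk of the proof being the simple verification that the $\epsilon>0$ hypothesis of Theorem~\ref{main3} can be relaxed to $\epsilon=0$.
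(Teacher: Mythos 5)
Your proof is correct and takes essentially the same route as the paper, which likewise obtains the corollary by combining Corollary~\ref{corNew2} with the argument of Theorem~\ref{main3} applied to the cone $E$ itself (i.e., at $\epsilon=0$). Your explicit verification that the hypothesis $\epsilon>0$ is never used in the proof of Theorem~\ref{main3} is a small but worthwhile clarification that the paper only gestures at.
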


Corollary \ref{corNew4} provides a tractable, operational, and easy to check test for existence of a value function that we are looking for. Indeed, it can be checked via solving only one linear program, LP (\ref{LP-pointed-1}).\\
LP (\ref{LP-pointed-1}) has been constructed by applying the idea of the proof of Theorem \ref{main3} on $E$. Precisely speaking, $E$ is pointed if and only if $int\,E^+\neq \emptyset$, where $int\,E^+:=\{\mathbf{d}\in\mathbb{R}^p:~~\mathbf{d}>0,~~ (\mathbf{x}_j-\mathbf{x}_k)^T\mathbf{d}>0,~~j=1,2,\ldots,t\}.$
Therefore, $E$ is pointed if and only if there exists some $\bar{\mathbf{d}}\in\mathbb{R}^p$ such that $\bar{\mathbf{d}}\geq \mathbf{e}$ and $(\mathbf{x}_j-\epsilon \mathbf{e}-\mathbf{x}_k)^T\bar{\mathbf{d}}\geq 1,~~j=1,2,\ldots,t.$ This is equivalent to vanishing the optimal value of LP (\ref{LP-pointed-1}); See the proof of Theorem \ref{main3}.

\begin{corollary}\label{corNew3}
If the optimal value of LP (\ref{LP-pointed-1}) is not zero (i.e., $E$ is not pointed), then the given preferences
(relations (\ref{1})) do not come from a DM with an increasing quasi-concave  value function.
\end{corollary}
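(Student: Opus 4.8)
The plan is to obtain this corollary as the contrapositive of Theorem \ref{main5}, bridged to the LP optimal value by the pointedness characterization already in hand. The first thing I would pin down is the precise meaning of the phrase ``the given preferences come from a DM with an increasing quasi-concave value function'': it means exactly that there exists an increasing quasi-concave $f:\mathbb{R}^p\to\mathbb{R}$ with $f(\mathbf{x}_j)>f(\mathbf{x}_k)$ for every $j=1,2,\ldots,t$, that is, a function satisfying (\ref{2}). With this reading the assertion to be proved becomes simply: $z^*\neq 0$ implies no such $f$ exists.

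The argument then proceeds in two implications. First I would connect the hypothesis $z^*\neq 0$ to the geometry of $E$. As explained in the discussion following Corollary \ref{corNew4}, the normalization-and-dual-cone argument of Theorem \ref{main3} applies verbatim to $E$ itself (the instance $\epsilon=0$), so $E$ is pointed if and only if the optimal value $z^*$ of LP (\ref{LP-pointed-1}) equals zero; hence $z^*\neq 0$ forces $E$ to be non-pointed. Second, I would invoke Theorem \ref{main5} in its contrapositive form: since the existence of an increasing quasi-concave value function satisfying the strict inequalities $\vartheta(\mathbf{x}_j)>\vartheta(\mathbf{x}_k)$ already implies that $E$ is pointed, the failure of pointedness rules out the existence of any such function. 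Chaining the two, $z^*\neq 0$ yields non-pointedness of $E$, which in turn precludes any increasing quasi-concave value function linking (\ref{1}) to (\ref{2}), which is precisely the claim.

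The only genuine subtlety, and it is purely a bookkeeping matter, is that Theorem \ref{main3} is stated for $\epsilon>0$ together with LP (\ref{LP-pointed}), whereas here I need the $\epsilon=0$ instance, LP (\ref{LP-pointed-1}). I would therefore either cite the explicit remark after Corollary \ref{corNew4} (which already records that $E$ is pointed iff $z^*=0$) or simply re-run the argument of Theorem \ref{main3} with the generators $\mathbf{x}_j-\mathbf{x}_k$ in place of $\mathbf{x}_j-\epsilon\mathbf{e}-\mathbf{x}_k$; no new analysis is required. In short, all the mathematical weight resides in Theorem \ref{main5} (necessity of pointedness) and in the LP reformulation of pointedness, so I do not expect any real obstacle: this corollary is their immediate combination, restated in decision-maker language, once the equivalence ``$z^*\neq 0 \Leftrightarrow E$ not pointed'' is made explicit.
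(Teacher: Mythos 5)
Your proposal is correct and follows exactly the route the paper intends: the corollary is the contrapositive of Theorem \ref{main5} (pointedness of $E$ is necessary for an increasing quasi-concave value function with the strict inequalities), combined with the $\epsilon=0$ instance of the LP characterization of pointedness recorded after Corollary \ref{corNew4}. The paper simply states this as a ``direct consequence'' of the preceding discussion, and your write-up supplies precisely those details, including the correct handling of the $\epsilon=0$ case.
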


Examples \ref{ex2} and \ref{ex3} illustrate Corollaries \ref{corNew2}-\ref{corNew3}.

\begin{example}\label{ex2}
Consider Example \ref{ex1} again. The cone $E$, in this example (depicted in Figure 2), is pointed and so (by Corollary \ref{corNew2}) there exists an increasing quasi-concave value function consistent with the given preference information. From Theorem \ref{main5} in the next section, it will be seen that the pointedness of $E$ leads to the existence of a desirable \textit{linear} value function as well. Pointedness of the cones is an important property in multiple criteria decision making. Indeed, if a preference cone is pointed, then the ordering relation induced by this cone is an
antisymmetric relation.\\
LP (\ref{LP-pointed-1}) for this example is as follows:
$$\begin{array}{lll}
z^*:=&\min & \displaystyle r_1+r_2+r_3+r_4+r_5\\
                        & s.t.  & -d_1+d_2-s_1+r_1=1,\vspace{1mm}\\
                        &        &-d_1+0.5d_2-s_2+r_2=1,\vspace{1mm}\\
                        &        &-d_1+2d_2-s_3+r_3=1,\vspace{1mm}\\
                        &        &d_1-s_4+r_4=1,\vspace{1mm}\\
                        &        &d_2-s_5+r_5=1,\vspace{1mm}\\
                        &        &s_1,\ldots,s_5,r_1,\ldots,r_5\geq 0,\vspace{1mm}\\
                        &&d_1,d_2\geq 0.\\
\end{array}$$
The optimal value of this problem is equal to zero, because $$(d_1=1,d_2=4,~s_1=2,~s_2=0,~s_3=6,~s_4=0,~s_5=3,~r_1=r_2=\ldots=d_5=0)$$ is one of its feasible solutions.
\end{example}

\begin{example}\label{ex3}
Assume $p=t=2,~\mathbf{x}_k=\left(\begin{matrix}
1\\
1
\end{matrix}\right)~\mathbf{x}_1=\left(\begin{matrix}
0\\
2
\end{matrix}\right)$, and $\mathbf{x}_3=\left(\begin{matrix}
2\\
0
\end{matrix}\right).$ The given preference information is as $\mathbf{x}_1\succ \mathbf{x}_k$ and $\mathbf{x}_2\succ \mathbf{x}_k$. Here, the cone $E$ is the whole space $\mathbb{R}^2$, and so it is not pointed. Therefore, by Corollary \ref{corNew2}, there does not exist any increasing quasi-concave value function consistent with the given preference information.\\
LP (\ref{LP-pointed-1}) for this example is as follows:
$$\begin{array}{lll}
z^*:=&\min & \displaystyle r_1+r_2+r_3+r_4\\
                        & s.t.  & -d_1+d_2-s_1+r_1=1,\vspace{1mm}\\
                        &        &d_1-d_2-s_2+r_2=1,\vspace{1mm}\\
                        &        &d_1-s_3+r_3=1,\vspace{1mm}\\
                        &        &d_2-s_4+r_4=1,\vspace{1mm}\\
                        &        &s_1,\ldots,s_4,r_1,\ldots,r_4\geq 0,\vspace{1mm}\\
                        &&d_1,d_2\geq 0.\\
\end{array}$$
In each feasible solution of this problem the value of at least one of $r_1$ and $r_2$ is positive. Otherwise, by summing the first two constraints of the problem, we get $s_1+s_2=-1$ which contradicts $s_1,s_2\geq 0.$ Thus, the optimal value of this LP is positive, and so the given preferences cannot be elicited from a DM with an increasing quasi-concave value function (According to Corollary \ref{corNew3}).
\end{example}

\subsection{Linear value function}

In the preceding subsection, we focused on the existence of an increasing quasi-concave value function consistent with preference information (\ref{1}), and showed that it exists if and only if the optimal value of LP (\ref{LP-pointed-1}) is zero. A natural question is, when an increasing \textit{linear} value function consistent with preference information (\ref{1}) exists? As linear value functions have a simpler structure rather than nonlinear ones, this question is important from a practical point of view. The following theorem answers this question.

\begin{thm}\label{main5}
The following three statements are equivalent.
\begin{itemize}
\item[(i)] There exists an increasing quasi-concave value function $\vartheta$ satisfying $\vartheta(\mathbf{x}_j)>\vartheta(\mathbf{x}_k)$ for all $j=1,2,\ldots,t$;
\item[(ii)] There exists an increasing linear value function $\vartheta$ satisfying $\vartheta(\mathbf{x}_j)>\vartheta(\mathbf{x}_k)$ for all $j=1,2,\ldots,t$;
\item[(iii)] The optimal value of LP (\ref{LP-pointed-1}) is zero.
\end{itemize}
\end{thm}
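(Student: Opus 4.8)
The plan is to observe that $(i)\Leftrightarrow(iii)$ is already in hand — it is precisely the content of Corollary \ref{corNew4} — so the entire task reduces to inserting statement $(ii)$ into this equivalence. I would accomplish this by proving the two implications $(iii)\Rightarrow(ii)$ and $(ii)\Rightarrow(i)$, which together with the known equivalence close the loop $(i)\Leftrightarrow(ii)\Leftrightarrow(iii)$.

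The implication $(ii)\Rightarrow(i)$ is immediate and I would dispatch it in a single line: a linear function is affine, hence concave, hence quasi-concave (as noted in Section 2), so an increasing linear $\vartheta$ satisfying the strict inequalities is in particular an increasing quasi-concave one satisfying them.

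The substantive step is $(iii)\Rightarrow(ii)$. Assuming $z^*=0$, I would reuse the chain of equivalences from the proof of Theorem \ref{main3}, now specialized to $\epsilon=0$: vanishing of the optimal value is equivalent to the existence of a vector $\bar{\mathbf{d}}\in\mathbb{R}^p$ with $\bar{\mathbf{d}}\geq\mathbf{e}$ and $(\mathbf{x}_j-\mathbf{x}_k)^T\bar{\mathbf{d}}\geq 1$ for all $j=1,2,\ldots,t$. I would then take the explicit linear candidate $\vartheta(\mathbf{x}):=\bar{\mathbf{d}}^T\mathbf{x}$ and verify the three required properties: it is linear by construction; since $\bar{\mathbf{d}}\geq\mathbf{e}>\mathbf{0}$ it is increasing in the sense of Definition \ref{def}(ii); and for each $j$, the identity $\vartheta(\mathbf{x}_j)-\vartheta(\mathbf{x}_k)=\bar{\mathbf{d}}^T(\mathbf{x}_j-\mathbf{x}_k)\geq 1>0$ yields the required strict inequalities. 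This establishes $(ii)$ and closes the cycle.

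The main obstacle is conceptual rather than computational: one must recognize that the dual vector $\bar{\mathbf{d}}$ certifying $z^*=0$ (equivalently, pointedness of $E$) is exactly the coefficient vector of a linear value function possessing all the desired properties. Once this identification is made the verification is routine and no technical difficulty remains beyond bookkeeping. It is worth stressing that this is where linearity genuinely enters: the earlier construction via the distance function to $E^{\bar\epsilon}$ produced a concave but generally nonlinear $\vartheta$, whereas the linear-programming certificate hands us a linear value function directly.
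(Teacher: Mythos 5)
Your proposal is correct and follows essentially the same route as the paper: both treat $(ii)\Rightarrow(i)$ as immediate, invoke the already-established equivalence $(i)\Leftrightarrow(iii)$, and close the cycle by reading off the vector $\mathbf{d}$ from the LP/pointedness certificate and using $\mathbf{x}\mapsto\mathbf{d}^T\mathbf{x}$ as the linear value function. The only cosmetic difference is that you complete the loop via $(iii)\Rightarrow(ii)$ directly from the LP feasible point, whereas the paper phrases it as $(i)\Rightarrow(ii)$ by first passing through pointedness of $E$ and the interior of its dual cone; given $(i)\Leftrightarrow(iii)$, these are the same argument.
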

\begin{proof}
The equivalence of (i) and (iii) was proved in the preceding subsection. Furthermore, the implication $(ii)\Longrightarrow (i)$ is trivial. So, we only prove $(i)\Longrightarrow (ii)$. If there exists an increasing quasi-concave value function $\vartheta$ satisfying $\vartheta(\mathbf{x}_j)>\vartheta(\mathbf{x}_k)$ for all $j=1,2,\ldots,t$, then the convex cone $E$ is pointed (Corollary \ref{corNew2}). So, by a manner similar to the proof of Theorem \ref{main3}, there exists some $\mathbf{d}\in\mathbb{R}^p$ such that $\mathbf{d}>0$ and $(\mathbf{x}_j-\mathbf{x}_k)^T\mathbf{d}>0$ for each $j=1,2,\ldots,t.$ Hence, the function $\vartheta(\mathbf{x})=\mathbf{d}^T\mathbf{x}$ is linear and increasing satisfying $\vartheta(\mathbf{x}_j)>\vartheta(\mathbf{x}_k)$ for $j=1,2,\ldots,t$. This completes the proof.
\end{proof}

Theorem \ref{main5} has an interesting message. Given reference information (\ref{1}), when there does not exist a linear increasing value function satisfying (\ref{1}), someone may look for an increasing quasi-concave value function satisfying (\ref{1}). But Theorem \ref{main5} says such a search is futile, because we proved that the existence of a linear increasing value function satisfying (\ref{1}) is equivalent to the existence of a quasi-concave increasing value function satisfying (\ref{1}).

\textbf{\section{Conclusion }}
Existence of an increasing quasi-concave value function consistent with given preference information plays a vital role in various approaches in Economics, Multiple Criteria Decision Making, and Applied Mathematics. Quasi-concave value functions are equivalent with convex to the origin indifference contours; a very fundamental and basic assumption in all economic textbooks. In this paper, we established necessary and sufficient conditions for existence of such a value function. In summary, we showed that the following four statements are equivalent:
\begin{itemize}
\item[(i)] There exists a linear increasing value function satisfying (\ref{1});
\item[(ii)] There exists a quasi-concave increasing value function satisfying (\ref{1});
\item[(iii)] The cone $E$, defined in (\ref{01new}), is pointed;
\item[(iv)]  The optimal value of LP (\ref{LP-pointed-1}) is equal to zero.
\end{itemize}
This full characterization leads to an operational, tractable and easy to use test for checking the consistency of the given preference information with quasi-concavity of the value function. Indeed, according to our results, checking the existence of a desirable value function consistent with given preference information is an easy task and can be done by solving only one LP with $p+t$ constructive constraints, $t+2p$ sign restrictions, and $t+2p$ variables.

\end{document}